\newtheorem{thm}{Theorem}[section]
\newtheorem{lem}[thm]{Lemma}
\newtheorem{prop}[thm]{Proposition}
\theoremstyle{definition}
\newtheorem{defn}[thm]{Definition}
\theoremstyle{remark}
\newtheorem{rem}[thm]{Remark}
\newtheorem{ej}[thm]{Example}
\numberwithin{equation}{section}
\newcommand{\Pp}{\mathcal{P}}
\newcommand{\Ii}{\mathcal{I}}
\newcommand{\Rr}{\mathcal{R}}
\newcommand{\Ind}{\operatorname{Ind}}
\newcommand{\End}{\operatorname{End}}
\newcommand{\Hom}{\operatorname{Hom}}
\newcommand{\Ext}{\operatorname{Ext}}
\newcommand{\Der}{\operatorname{D^b}}
\newcommand{\gldim}{\operatorname{gldim}}
\newcommand{\ZZ}{\mathbb{Z}}
\def\semidirprod{\begin{picture}(8,8)\qbezier(2,0.5)(5,3.5)(8,6.5)\qbezier(2,6.5)(5,3.5)(8,0.5)\put(2,0.5){\line(0,1){6}}\end{picture}}
\newcommand{\HVCenter}[1]{\setbox 0=\hbox{#1}%
        \dimen0=\wd0%
        \dimen1=\ht0%
        \divide\dimen0 by 2%
        \divide\dimen1 by 2%
        \hskip -\dimen0%
        \lower \dimen1%
        \box0%
        \hskip -\dimen0}
\newcommand{\HBCenter}[1]{\setbox 0=\hbox{#1}%
        \dimen0=\wd0%
        \dimen1=\ht0%
        \divide\dimen0 by 2%
        \hskip -\dimen0%
        \box0%
        \hskip -\dimen0}
\newcommand{\LTCenter}[1]{\setbox 0=\hbox{#1}%
        \dimen1=\ht0%
        \lower \dimen1%
        \box0%
        \hskip -\dimen0}
\newcommand{\HTCenter}[1]{\setbox 0=\hbox{#1}%
        \dimen0=\wd0%
        \dimen1=\ht0%
        \divide\dimen0 by 2%
        \hskip -\dimen0%
        \lower \dimen1%
        \box0%
        \hskip -\dimen0}
\newcommand{\RTCenter}[1]{\setbox 0=\hbox{#1}%
        \dimen0=\wd0%
        \dimen1=\ht0%
        \hskip -\dimen0%
        \lower \dimen1%
        \box0%
        \hskip -\dimen0}
\newcommand{\RBCenter}[1]{\setbox 0=\hbox{#1}%
        \dimen0=\wd0%
        \dimen1=\ht0%
        \hskip -\dimen0%
        \box0%
        \hskip -\dimen0}
\newcommand{\RVCenter}[1]{\setbox 0=\hbox{#1}%
        \dimen0=\wd0%
        \dimen1=\ht0%
        \divide\dimen1 by 2%
        \hskip -\dimen0%
        \lower \dimen1%
        \box0%
        \hskip -\dimen0}
\newcommand{\LVCenter}[1]{\setbox 0=\hbox{#1}%
        \dimen1=\ht0%
        \divide\dimen1 by 2%
        \lower \dimen1%
        \box0%
        \hskip -\dimen0}
\newcommand{\mylabel}[1]{\put(0,0){\color{white} \circle*{7}}\put(0,0){\circle{7}}\put(0,0){\HVCenter{\tiny\bf #1}}}
\newcommand{\mylabeltwo}[2]{\put(0,0){\color{white} \circle*{7}}\put(0,0){\circle{7}}\put(0,0){\HVCenter{\tiny\bf #1}}\put(0,4.5){\HVCenter{\tiny #2}}}
\newcommand{\mypdott}{\put(0,0){\circle*{2}}}
\begin{document}

\title[On ${\bf m}$-Cluster tilted algebras and trivial extensions]{On ${\bf m}$-Cluster tilted algebras and trivial extensions}%
\author[Fern\'andez]{Elsa Fern\'andez}

\address{Facultad de Ingenier\'{\i}a, Universidad Nacional de la Patagonia San Juan Bosco, 9120 Puerto Madryn, Argentina
}%
\email{elsafer9@gmail.com}%
\author[Pratti]{Isabel Pratti}
\address{Facultad de Ciencias Exactas y Naturales, Universidad Nacional de Mar del Plata, 7600 Mar del Plata, Argentina}%
\email{nilprat@mdp.edu.ar}%

\author[Trepode]{Sonia Trepode}
\address{Facultad de Ciencias Exactas y Naturales, Universidad Nacional de Mar del Plata, 7600 Mar del Plata, Argentina }%
\email{strepode@mdp.edu.ar}%

\thanks{
The third author is a researcher from CONICET, Argentina. The
authors acknowledge partial support from CONICET and ANPCyT.
The authors also would like to thank Mar\'ia In\'es Platzeck
for useful discussion and  suggestions.}

\subjclass[2000]{
16G70,  16G70, 16E10}
\keywords{}%

\begin{abstract}In this work we study the connection between
iterated tilted algebras and m-cluster tilted algebras. We show that
 an iterated tilted algebra induces an m-cluster tilted
algebra. This m-cluster tilted algebra can be seen as a trivial extension of another
iterated tilted algebra which is derived equivalent to the original one. We give a procedure to find
this new iterated tilted algebra. These m-cluster tilted algebras are quotients of higher relation extensions.
\end{abstract}
\maketitle

{\bf Introduction.}
Let $H$ be a finite dimensional hereditary algebra. The cluster
category $C = {\mathcal{C}}_H$ associated with $H$ was defined and
investigated in \cite{BMRRT} as the orbit category of
$D^b(H)/\tau^{-1}[1]$, where $\tau$ is the Asuslander-Reiten
translation and [1] is the shift. The special case of Dynkin type
$A_n$ has been also studied in {\cite{BMR2}}. A cluster tilting
object $T$ in the cluster category is an object such that
$\Ext^1_{\mathcal{C}}(T,T) = 0$ and it is maximal with this
property.

The cluster tilted algebras were first introduced by Buan, Marsh and
Reiten in \cite{BMR2} as endomorphism ring of cluster tilting objects
over the cluster category. Since then, this class of algebras has
been extensively studied. Examples of works along these lines can be
found, for instance, in {\cite{BMR1}, \cite{BMR2}, \cite{BMR3},
\cite{BR}, \cite{BRS}, \cite{CCS}, \cite{CCS2} , \cite{KR}}

In particular, Assem, Br$\ddot{u}$stle and Schiffler have
shown in \cite{ABS} that cluster tilted algebras are trivial
extensions.  More precisely, they defined for an algebra $B$ with
${\rm gl dim} B$ at most two the relation extension of $B$ as the
algebra $\mathcal{R} (B) = B \semidirprod \Ext^2_ B({\rm D}B, B)$, where
${\rm D}B = \Hom _B( B, k)$. Then the authors proved that an
algebra $\mathcal{C}$ is a cluster tilted algebra if and only if
it is the relation extension of some tilted algebra $B$. This
connection was extended to iterated tilted algebras of global
dimension at most two in \cite{BFPPT}. In the mentioned work it
was shown that an algebra $\mathcal{C}$ is cluster tilted if and
only if there exists an iterated tilted algebra $B$ of ${\rm gl
dim}B \leq 2$ and a sequence of homomorphisms
$$B \rightarrow \mathcal{C} \overset{\pi} {\rightarrow}
\mathcal{R}(B) \rightarrow B$$ \noindent whose composition is the
identity map and the kernel of $\pi$ is contained in $\rm{rad}^2 C$. They
also proved that $C$ and $\mathcal{R}(B)$ have the same quivers.

In \cite{A}, Amiot has proved that for any algebra $B$ of global
dimension at most two, there exists a cluster category
associated with it such that $N$ is a cluster tilting object in this
category. The endomorphism ring of $A$ over that cluster category is the
tensor algebra over the $B$-$B$-bimodule $\Ext^{2}_B(DB,B)$. In the case that
$B$ is an iterated tilted algebra the mentioned endomorphism algebra
is a cluster tilted algebra.

The $m$-cluster category was introduced by Thomas in \cite{H} as the
orbit category $\mathcal C_m = D^b(H)/\tau^{-1}[m]$,$T$ is said to be an $m$-cluster
tilting object if $\Ext^i_{{\mathcal{C}_m(H)}}(\widetilde{T},\widetilde{T}) = 0$ for
$i = 1,\cdots, m$  and the number of isomorphism classes of
indecomposable summands of $T$ is equal to the number of
isomorphism classes of simple $H$-modules. The endomorphism algebra  $C_m$ of an
$m$-cluster tilting object over the $m$-cluster category is called
an $m$-cluster tilted algebra. The procedure introduced by Amiot in
\cite{A} can be extended to algebras of global dimension at most $m+1$, see \cite{IO}, and
$C_m$ is the tensor algebra over the $B$-$B$ bimodule
$\Ext^{m+1}_B(DB,B)$. In the case that $B$ is iterated tilted, this
endomorphism algebra is an $m$-cluster tilted algebra. Then the
$m$-cluster tilted algebras are tensor algebras. We are going to
show that, in particular cases, they can be seen as trivial
extensions.

 Our objective in this paper is to study the relation between iterated tilted
algebras of global dimension at most $m+1$ and $m$-cluster
tilted algebras.  For $B$, with global dimension at most $m+1$ the
$m$-relation extension of $B$ is the algebra $\mathcal{R}_m (B) = B
\semidirprod \Ext^{m+1}_ B({\rm D}B, B)$, where ${\rm D}B = \Hom _B( B, k)$.
Let $S_m$ be the standard fundamental domain of $\mathcal{C}_m$ and
$\tilde{X}$ be the class in the orbit category of an object $X$ in
$\Der(H)$. Following  \cite{R}, we recall that an object $T$ in $\Der(H)$ is a tilting complex if and only if ${\rm
Hom}_{\Der(H)}(T,T[i])=0$ for all $i \neq 0$ and $T$ has exactly
$n$ non-isomorphic summands, where $n$ is the number of vertices of $Q$. It follows from \cite{H} that, $B$ is an iterated tilted algebra if and only if $B = {\rm End}_{\Der(H)}(T)$ where $T$ is a
tilting complex.

The following is our first result.

{\bf Theorem 1.}  Let $T$ be a tilting complex in $\Der(H)$ which
belongs to $S_m$. Then ${\End}_{{\mathcal{C}}_m}(\widetilde{T})$ is an
$m$-cluster tilted algebra. Moreover if global dimension of $B = {\rm End}_{\Der(H)}(T)$ is at most $m+1$,
then ${\End}_{{\mathcal{C}}_m}(\widetilde{T})$ is isomorphic to
$\mathcal{R}_m(B)$.

Given an iterated tilted algebra $B = {\rm End}_{\Der(H)}(T)$ of global dimension
at most $m+1$, the corresponding
tilting complex $T$ can be spread in an arbitrary number of copies of mod $H$ inside the bounded
derived category of $H$. In order to get a tilting complex in the fundamental domain $S_m$,
we show that the rolling procedure introduced in \cite{BFPPT} can be generalized to a new
procedure $\rho_m$. We show that, iterating this procedure, we eventually get for some integer $h$
a tilting complex $\rho_m^h(T)$ in the fundamental domain, such that $T$ and $\rho_m^h(T)$
represent the same object in the $m$-cluster category. Moreover, the global dimension of ${\rm End}_{\Der(H)} \rho_m^h(T)$
is at most $m+1$. The following is our main theorem.

{\bf Theorem 2.}  Let $H=kQ$ a hereditary algebra. If $T$ is a tilting
complex in $\Der(H)$ such that $B = {\rm End}_{\Der(H)}(T)$ has global
dimension at most $m+1$ then

\vskip.1in a) $\widetilde{T}$ is an $m$-cluster tilting object in
the $m$-cluster category $\mathcal{C}_m$ and
$\mathcal{C}_m(B)={\rm End}_{\mathcal{C}_m}(\widetilde{T})$ is an
$m$-cluster tilted algebra.

\vskip.1in b) There exists a sequence of algebra homomorphisms

$$B \rightarrow \mathcal{C}_m(B) \overset{\pi} {\rightarrow}  \mathcal{R}_m(B)
\rightarrow B$$ \noindent whose composition is the identity map and the kernel is contained in $\rm{rad}^2 C$.
Moreover $C$ and $\mathcal{R}_m(B)$ have the same quiver.

\vskip.1in c) There exists an iterated tilted algebra $B' = \rm End_{\Der(H)} \rho^h (T)$ of type
$Q$ with ${\rm gl dim}B' \leq m+1$ such that:

$$\mathcal{C}_m(B) \simeq \mathcal{R}_m(B')\simeq
\mathcal{C}_m(B')$$

On the other hand, this kind of $m$-cluster tilted algebras  is in connection with
the higher relation extensions introduced by Assem, Gatica and Schiffler in \cite{AGS}.
More precisely they are quotients of higher relations extensions. Then, in some particular
cases, it is possible to describe the quivers with relations of  the $m$-cluster tilted algebras
induced by iterated tilted algebras.

A natural question to investigate is to know whether any $m$-cluster tilted algebra is induced by iterated
tilted algebras. That is, if any  $m$-cluster tilted algebra is a direct product of $m_i$-relation extensions of
iterated tilted algebras. In \cite{M}, Murphy has classified $m$-cluster tilted algebras of type $A_n$.
We observe that using this classification is not difficult to prove that the algebras in this class
are of the desired form.

In Section 1 we fix the notation and recall some well known facts about $m$-cluster categories.
In  Section 2 we prove Theorem 1 and we show with an example that the hypothesis that the tilting complex belongs to the fundamental domain is essential.
In Section 3 we define the $m$-rolling procedure. We show that given an arbitrary tilting complex whose endomorphism algebra has global dimension at most $m+1$    we eventually reach, by iteration of this procedure,  a tilting complex in the fundamental domain such that the global dimension of its endomorphism algebra is also bounded by $m+1$. Finally we prove our main Theorem.

\section{Preliminaries}

Throughout this paper let $Q$ be a finite connected quiver without
oriented cycles, and $k$ an algebraically closed field. Then $H =
kQ$ is a hereditary finite dimensional algebra. We denote by mod$H$
the category of  finitely generated right modules over $H$.

As usual, ${\rm ind}H$ denotes a full subcategory whose objects are a full
set of representatives of the isomorphism classes of
indecomposable $H$-modules.

We consider $\Der(H)= \Der({\rm mod}H)$  the  derived category of
bounded complexes of finitely generated $H$-modules. Recall that
in $\Der(H)$ Serre Duality holds, that is, $${\rm
Hom}_{\Der(H)}(X,\tau Y)\simeq D{\rm Hom}_{\Der(H)}(Y,X[1])$$
\noindent for $X$, $Y$ in $\Der(H)$, where $[1]$ is the shift functor and $\tau$ is the AR-translation in $D^b(H)$.

It follows from \cite{H} and \cite{R} that if $T$ is a tilting complex then there exists an
equivalence of triangulated categories $G: \Der(H)\rightarrow
\Der(B)$ such that $G(T)=B$ and $G(\tau T[1])= {\rm D}B$.
Following \cite{BFPPT} we denote :

\vskip.1in

\centerline {$P_{X,T}=G(X)$ and $I_{X,T}=G(\tau X[1])$.}
\vskip.1in

for any direct summand $X$ of
$T$

The cluster category associated with $H$ was defined and
investigated in \cite{BMRRT}. Later, for a positive integer $m$,
the $m$-cluster category was studied in \cite{K} (see
[22,25,28,8]). By
\cite{K} we know that ${\mathcal{C}_m(H)}$ is a triangulated
category.  The objects in the m-cluster category are the $F_m =
\tau^{-1}[m]$ orbits of objects in $\Der(H)$ and the morphisms are
given by $\Hom_{\mathcal{C}_m(H)}(\widetilde{X},\widetilde{Y}) =
\bigoplus_{i \in Z} \Hom_{\Der(H)}(X,F_m^iY)$ where $X$, $Y$ are
objects in $\Der(H)$ and $\widetilde{X}$, $\widetilde{Y}$ are
their respective $F_m$-orbits.

It is clear that $S_m = \bigcup _{i=0}^{m-1}\Ind H[i] \bigcup
H[m]$ is a fundamental domain for the action of $F_m$ in ${\rm
Ind}\,\Der(H)$. The set $S_m$ contains exactly one representative
from each $F_m$-orbit in ${\rm Ind }\,\Der(H)$.

A characterization of object $\widetilde{T}$ of ${\mathcal{C}_m(H)}$ as an
m-cluster tilting object is given by
$\Ext^i_{{\mathcal{C}_m(H)}}(\widetilde{T},\widetilde{T}) = 0$ for
$i = 1,\cdots, m$  and the number of isomorphism classes of
indecomposable summands of $T$ is equal to the number of
isomorphism classes of simple $H$-modules. From now on, we will
assume for simplicity that $\widetilde{T}$ is basic in the usual sense.

\section{Trivial extensions and {\bf m}-cluster tilted algebras}
In all that follows let $B$ be a connected finite dimensional
algebra over an algebraically closed field $k$ and $M$ a $B$-$B$-bimodule. Recall that the trivial
extension of $B$ by $M$ is the algebra $B \semidirprod  M$ with
underlying vector space $B \oplus M$, and multiplication given by
$(a,m)(a',m') = (aa',am' + ma')$, for any $a,\;a' \in B$ and
$m,\;m' \in M$. For further properties of trivial extensions we
refer the reader to \cite{FGR}.
Let $B$ be an algebra of global dimension at most $m+1$ and $DB=
{\rm Hom}_k(B,k)$. The trivial extension $\mathcal{R}_m(B)=B
\semidirprod  \Ext^{m+1}_B(DB,B)$ is called the $m$-relation extension of $B$.
This concept is the $m$-ified analogue of the notion of relation
extension introduced in \cite{ABS}. In this work the authors
proved that an algebra $\mathcal{C}$ is a cluster tilted algebra
if and only if it is the relation extension of some tilted algebra $B$.

We are going to show that there exists a connection between the $m$-relation extensions of iterated
tilted algebras of global dimension at most $m+1$ and  $m$-cluster tilted algebras.
In fact, we are going to prove that if the tilting complex $T$ belongs to the fundamental domain $S_m$, then the
  the $m$-relation extension of the iterated tilted algebra is an $m$-cluster tilted algebra.

We start with the following technical lemma, which will be needed in the sequel. When $m=1$ this result is proven in
  \cite{BMRRT}, Proposition 1.5(a).

\begin{lem}\label{I} Let $X$ and $Y$ be objects of $S_m
 = \bigcup_{i=0}^{m-1} \Ind H[i] \bigcup H[m]$.

 Then
 $\Hom_{\Der(H)}(F_m^iX,Y) = 0$ for all $i \neq -1,\;0$.
\end{lem}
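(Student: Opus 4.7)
The plan is to translate the statement into an Ext computation over the hereditary algebra $H$. Writing $X = X'[a]$ and $Y = Y'[b]$ with $X', Y' \in \Ind H$ and $0 \leq a, b \leq m$, and noting that $X'$ (respectively $Y'$) must be projective when $a = m$ (respectively $b = m$), the identity $F_m = \tau^{-1}[m]$ gives $F_m^i X \simeq (\tau^{-i} X')[a + mi]$. The key structural fact is that on $\Der(H)$, $\tau^{-1}$ acts on $\Ind H$ by the usual translate except on injectives $I$, which go to $P[1]$ where $P$ is the corresponding projective; the situation is symmetric for $\tau$ on projectives. Iterating, $\tau^{-i} X' \simeq M[\sigma]$ for some $M \in \Ind H$ and an integer $\sigma$ of the same sign as $i$ satisfying $|\sigma| \leq |i|$.

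Combining this with $\Hom_{\Der(H)}(M[p], N[q]) \simeq \Ext^{q-p}_H(M, N)$ and hereditarity, the group $\Hom_{\Der(H)}(F_m^i X, Y) \simeq \Ext^s_H(M, Y')$, where $s = b - a - mi - \sigma$, vanishes unless $s \in \{0, 1\}$. For $i \geq 2$, the bounds $b \leq m$, $a \geq 0$, $\sigma \geq 0$ force $s \leq -m \leq -1$, so the group vanishes. For $i \leq -2$ the same bounds yield $s \geq m \geq 1$, with equality $s = 1$ only possible when $m = 1$, $i = -2$, $a = 1$, $b = 0$, $\sigma = 0$; but $a = m = 1$ forces $X'$ projective, so $\tau X' \simeq I[-1]$ and hence $\sigma \leq -1$, contradicting $\sigma = 0$.

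The main obstacle is the remaining case $i = 1$, which is not immediately settled by hereditary dimension. There the inequalities pin down a unique potentially nonzero configuration, namely $a = 0$, $b = m$, $\sigma = 0$, $s = 0$: so $X = X'$ is a non-injective module, $Y = Y'[m]$ with $Y'$ an indecomposable projective, and the group reduces to $\Hom_H(\tau^{-1} X', Y')$. Applying the Serre duality formula recalled in Section 1 rewrites this as $D\Ext^1_H(Y', X')$, which vanishes because $Y'$ is projective.
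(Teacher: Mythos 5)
Your proof is correct. It runs on the same engine as the paper's argument --- write every object as a shifted $H$-module, use $\Hom_{\Der(H)}(M[p],N[q])\simeq \Ext^{q-p}_H(M,N)$, and invoke hereditariness to kill everything outside degrees $0$ and $1$ --- but it is organized differently and diverges genuinely at the hard boundary case. The paper splits into eight cases according to the sign of $i$ and the positions of $\tau^{-i}X$ and $Y$, and settles the delicate configuration at $i=1$ ($X$ a module, $Y=P[m]$, so that the exponent is $0$) by the module-theoretic observation that over a hereditary algebra a nonzero map from an indecomposable into a projective splits onto its image, forcing $\tau^{-1}X$ projective and hence $X=I[-1]\notin S_m$. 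You instead carry a single uniform exponent $s=b-a-mi-\sigma$, with $\sigma$ recording the degree shifts picked up each time $\tau^{\pm 1}$ crosses an injective or a projective, and you dispose of the same configuration by Serre duality, $\Hom_H(\tau^{-1}X',Y')\simeq D\Ext^1_H(Y',X')=0$ for $Y'$ projective. Your explicit bookkeeping of $\sigma$ is in fact tidier than the paper's case division, which normalizes $\tau^{-i}X$ as $L[s]$ with $0\le s\le m-1$ even though iterating $\tau^{-1}$ can push the degree outside that range (the paper's inequalities happen not to need the upper bound, so its proof survives, but yours makes the point cleanly); and your isolation of the extra extremal configuration at $i=-2$, $m=1$, eliminated because $a=m$ forces $\sigma\le -1$, is a correct and complete treatment of the only other place where the inequalities alone do not suffice. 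The trade-off is that the paper's resolution of the $i=1$ case stays entirely inside elementary facts about hereditary module categories, while yours leans on the Serre duality formula recalled in Section 1; both are sound.
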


\begin{proof} We have $\Hom_{\Der(H)}(F_m^{i}X,Y) =
\Hom_{\Der(H)}(\tau^{-i}X[im],Y)$. We prove the statement by
considering eight cases, according the integer $i\neq -1,0$ is positive or negative, and the objects $\tau^{-1}X, Y$ in $S_m$
 are in $\bigcup_{i=0}^{m-1} \Ind H[i]$ or in $ \bigcup H[m]$.

\vskip.2in (1) Let $i \geq 1$ and $\tau^{-i}X$, $Y$ be objects in
$\bigcup_{j=0}^{m-1} \Ind H[j]$. Then $Y = Z[r]$ and $\tau^{-i}X =
L[s]$ with $0 \leq r,\; s \leq m-1$. We get that

\noindent $\Hom_{\Der(H)}(F_m^{i}X,Y) =
\Hom_{\Der(H)}(\tau^{-i}X[im],Y) = \Hom_{\Der(H)}(L[s +
im],Z[r])$ $=0$, because $s+im > m$ and $r<m$

\vskip.2in (2) Let $i\geq 1$, $\tau^{-i}X = L[s]$, with $0 \leq s
\leq m-1$ and $Y = P[m]$, for some indecomposable projective
$H$-module $P$.

We get $\Hom_{\Der(H)}(F_m^{i}X,Y) =
\Hom_{\Der(H)}(\tau^{-i}X[im],Y) = \Hom_{\Der(H)}(L[s +im],P[m])=
\Hom_{\Der(H)}(L[(i-1)m+s],P)$. In the case  $i = 1$ we obtain that  $s \neq
0$. In fact, if $i = 1$ and $s = 0$ we have
$\Hom_{\Der(H)}(L,P) \not= 0$. Hence there exists a projective
$H$-module $P^{\prime}$ such that $ L = \tau^{-1}X = P^{\prime}$.
On the other hand, $\tau P^{\prime} = I[-1]$ with $I$ an injective module, so that
$X=I[-1]$. This  contradicts the fact that $X \in S_m$ and proves that $s\neq 0$ when $i=1$. The statement holds in this case because  $\Hom_{\Der(H)}(L[s],P) = 0$.

If $i>1$ then $(i-1)m \geq m$, so $\Hom_{\Der(H)}(L[(i-1)m+s],P) =
0$ .

\vskip.2in (3) Consider $i \geq 1$, $\tau^{-i}X = P[m]$ for all
indecomposable projective $H$-module $P$, and $Y = Z[r]$ with $0
\leq r \leq m-1 $. Then $(i+1)m > m>r$. It follows that

$$\Hom_{\Der(H)}(F_m^{i}X,Y) = \Hom_{\Der(H)}(\tau^{-i}X[im],Y) $$
$$=
\Hom_{\Der(H)}(P[(i+1)m],Z[r])= 0$$.

\vskip.2in (4) Suppose  that $i \geq 1$, $\tau^{-i}X = P[m]$ and
$Y = P'[m]$, where $P$, $P'$ are indecomposable projective
$H$-modules. Since $(i+1)m > m$ and

$$\Hom_{\Der(H)}(F_m^{i}X,Y) = \Hom_{\Der(H)}(\tau^{-i}X[im],Y)$$
$$ = \Hom_{\Der(H)}(P[(i+1)m],P'[m])$$, we obtain that there are no
nonzero morphism from $\tau^{-i}X[im]$ to $Y$.

\vskip.2in (5) Let $i \leq -2$ and let $\tau^{-i}X$, $Y$ be
objects in $\bigcup_{j=0}^{m-1}\Ind \, H[j]$. Then  $\tau^{-i}X =
L[s]$, $Y = Z[r]$, with $0 \leq r,s \leq m-1$, and so

{\small
$\Hom _{\Der(H)}(F_m^{i}X,Y) = \Hom_{\Der(H)}(\tau^{-i}X[im],Y) =
\Hom_{\Der(H)}(L[(s+im],Z[r])$. }Assume first that  $s \leq r$. 
Let
$k > 0$ such that $s +k = r$. Thus $\Hom_{\Der(H)}(L[(s+im],Z[r])
= \Hom_{\Der(H)}(L[(s+im],Z[s+k]) = \Hom_{\Der(H)}(L[(im],Z[k]) =$

$\Hom_{\Der(H)}(L[(im - k],Z)$ is equal to zero because $im - k <
-2m \leq -2$.

Now for $r < s$, we have $r+k = s$, with $k > 0$. It follows that
$im+k \leq -2m + m -1 = -m -1 \leq -2$. Therefore
$\Hom_{\Der(H)}(L[(s+im],Z[r]) = \Hom_{\Der(H)}(L[(r+k+im],Z[r]) =
\Hom_{\Der(H)}(L[(k+im],Z) = 0$.

\vskip.2in 
(6) Suppose that $i \leq -2$, $\tau^{-i}X = P[m]$, $P$
an indecomposable projective $H$-module, and $Y =Z[r]$ with $0
\leq r \leq m-1$. We have

\noindent
{\small
$\Hom_{\Der(H)}(F_m^{i}X,Y) = \Hom_{\Der(H)}(\tau^{-i}X[im],Y)=
\Hom_{\Der(H)}(P[(i+1)m],Z[r]) = \Hom_{\Der(H)}(P[(i+1)m-r],Z) =
0$}
, because $(i+1)m -r < 0$ and $P$ is projective.

\vskip.2in (7) If $i \leq -2$, $\tau^{-i}X = L[s]$ with $0 \leq s
\leq m-1$, and  $Y =P[m]$ for some indecomposable projective
$H$-module $P$, then $s + im -m = s + (i-1)m \leq -3m + s \leq -3.$
On the other hand $\Hom_{\Der(H)}(F_m^{i}X,Y) =
\Hom_{\Der(H)}(\tau^{-i}X[im],Y) = \Hom_{\Der(H)}(L[(s+im],P[m]) =
\Hom_{\Der(H)}(L[(s+im-m],P)$. Thus we obtain that there are no
nonzero morphisms from $\tau^{-i}X[im]$ to $Y$.

\vskip.2in (8) Finally, for $i \leq -2$, $\tau^{-i}X = P[m]$, $Y =P'[m]$,
with $P$, $P'$ indecomposable projective $H$-modules, we have that
$$\Hom_{\Der(H)}(F_m^{i}X,Y) = \Hom_{\Der(H)}(\tau^{-i}X[im],Y)$$Ç
$$= \Hom_{\Der(H)}(P[((i+1)m],P'[m])
= \Hom_{\Der(H)}(P[im],P') = 0$$
since $im \leq -2$.

\vskip.1in In all cases $\Hom_{\Der(H)}(F_m^{i}X,Y) =0$ for $i\neq -1,0$, so the proof of the lemma is complete.
\end{proof}

\vskip.2in \noindent In analogy with \cite{ABS} we have the
following.

\begin{rem}Let $X$ be an object in $\Der(H)$. We can consider the
$k$-vector space $\Hom_{\Der(H)}(X,F_mX)$ as an
End$_{\Der(H)}(X)$-bimodule defining $ufv = F_m(u)  f  v$ for $u$,
$v$ in End$_{\Der(H)}X$ and $f$ in $\Hom_{\Der(H)}(X, F_mX)$
\end{rem}

Next we show that the endomorphism algebra
End$_{\mathcal{C}_m}(\widetilde{X})$ is a trivial extension.  This
result is an m-ified analogue of Lemma 3.3 of \cite{ABS}.

\begin{prop}\label{C} Let $\widetilde{X}$ be an object in  ${\mathcal{C}}_m$
induced by an object $X$ in $S_m$. Then ${\rm
End}_{{\mathcal{C}}_m}(\widetilde{X}) ={\rm End}_{\Der(H)}X
\semidirprod  \Hom_{\Der(H)}(X,F_m X)$
\end{prop}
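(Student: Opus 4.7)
The plan is to unfold the definition of the orbit category and then use Lemma \ref{I} twice: once to cut the direct sum defining $\mathrm{Hom}_{\mathcal{C}_m}(\widetilde X,\widetilde X)$ down to two summands (giving the correct underlying vector space), and once more to kill the spurious term appearing when one multiplies two bimodule components (giving the correct ring structure).

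First I would write, by definition of the orbit category,
\[
\mathrm{End}_{\mathcal{C}_m}(\widetilde X) \;=\; \bigoplus_{i\in\mathbb Z}\mathrm{Hom}_{D^b(H)}(X,F_m^{\,i}X).
\]
Since $F_m$ is an auto-equivalence of $D^b(H)$, one has $\mathrm{Hom}_{D^b(H)}(X,F_m^{\,i}X)\cong\mathrm{Hom}_{D^b(H)}(F_m^{-i}X,X)$, and Lemma \ref{I} applied with $Y=X$ says this vanishes unless $-i\in\{-1,0\}$, i.e., unless $i\in\{0,1\}$. Hence as a $k$-vector space
\[
\mathrm{End}_{\mathcal{C}_m}(\widetilde X) \;=\; \mathrm{End}_{D^b(H)}(X)\,\oplus\,\mathrm{Hom}_{D^b(H)}(X,F_m X),
\]
which matches the underlying space of $\mathrm{End}_{D^b(H)}(X)\semidirprod\mathrm{Hom}_{D^b(H)}(X,F_m X)$.

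Next I would check that the induced multiplication is that of the trivial extension. Recall that composition in the orbit category sends $\phi\in\mathrm{Hom}(X,F_m^{\,i}X)$ followed by $\psi\in\mathrm{Hom}(X,F_m^{\,j}X)$ to $F_m^{\,i}(\psi)\circ\phi\in\mathrm{Hom}(X,F_m^{\,i+j}X)$. Writing two generic elements as pairs $(f_0,f_1)$ and $(g_0,g_1)$ with $f_0,g_0\in\mathrm{End}_{D^b(H)}(X)$ and $f_1,g_1\in\mathrm{Hom}_{D^b(H)}(X,F_m X)$, the four cross-terms of the composition $(f_0+f_1)\circ(g_0+g_1)$ give, respectively, $f_0g_0$, $f_1g_0$, $F_m(f_0)g_1$, and $F_m(f_1)g_1$. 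The last one lies in $\mathrm{Hom}_{D^b(H)}(X,F_m^{\,2}X)$, which is zero by Lemma \ref{I} (the case $i=2$, $Y=X$). So the composition reduces to
\[
(f_0,f_1)(g_0,g_1)\;=\;\bigl(f_0g_0,\,F_m(f_0)g_1+f_1g_0\bigr),
\]
which is precisely the product in the trivial extension once we use the bimodule structure on $\mathrm{Hom}_{D^b(H)}(X,F_m X)$ described in the preceding remark, namely $u\cdot h\cdot v=F_m(u)\circ h\circ v$.

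The one step requiring genuine input is the vanishing of $\mathrm{Hom}_{D^b(H)}(X,F_m^{\,2}X)$, which I expect to be the only real obstacle; everything else is formal unpacking of definitions. Both the dimension count and the annihilation of the cross-term $F_m(f_1)g_1$ rest on applying Lemma \ref{I} to pairs of summands of $X\in S_m$, and the hypothesis $X\in S_m$ is exactly what allows that lemma to be invoked. Combining the two pieces yields the claimed isomorphism of algebras.
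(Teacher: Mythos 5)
Your proposal is correct and follows essentially the same route as the paper: decompose $\End_{\mathcal{C}_m}(\widetilde X)$ as the direct sum over $i$ of $\Hom_{\Der(H)}(X,F_m^iX)$, invoke Lemma \ref{I} to reduce to the $i=0,1$ summands, and then observe that the only non-formal term in the product, landing in $\Hom_{\Der(H)}(X,F_m^2X)$, vanishes. Your extra care in translating $\Hom(X,F_m^iX)\cong\Hom(F_m^{-i}X,X)$ so that Lemma \ref{I} applies literally (including for the vanishing of the degree-two term, which the paper asserts directly) is a small but welcome tightening of the same argument.
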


\begin{proof} We have
$\End_{{\mathcal{C}}_m}(\widetilde{X})= \bigoplus_{i \in Z}
\Hom_{\Der(H)}(X,F^i_mX)$ as $k$-vector spaces, and the product is
given by

\[(g_i)_{i \in Z}(f_j)_{j \in Z} = (\sum_{i+j=l}F^j_mg_if_j)_{l \in
Z}\]

Applying Lemma \ref{I}, it follows that

${\rm End}_{{\mathcal{C}}_m}(\widetilde{X}) = \Hom_{\Der(H)}(X,X)
\bigoplus \Hom_{\Der(H)}(X,F_mX)$

Let $f$, $g$ be elements in
$\End_{{\mathcal{C}}_m}(\widetilde{X})$. Then $f=(f_0,f_1)$ and
$g=(g_0,g_1)$ where $f_0$, $g_0$ are in $\Hom_{\Der(H)}(X,X)$ and
$f_1$, $g_1$ are in $\Hom_{\Der(H)}(X,F_mX)$. Since

$\Hom_{\Der(H)}(X,F^2_mX) = \Hom_{\Der(H)}(X,\tau^{-2}X[2]) = 0$, we obtain that $F_mg_1f_1$ $ = 0$, and therefore $gf
=(g_0f_0,F_mg_0f_1 + f_0g_1)$. Considering the bimodule structure
of $\Hom_{\Der(H)}(X,F_mX)$ given above, we conclude that
$\End_{{\mathcal{C}}_m}(\widetilde{X})$ is the trivial extension
of
 $\End_{\Der(H)}(X)$ by the bimodule $\Hom_{\Der(H)}(X,F_m X)$,
 as desired.
 \end{proof}

\begin{lem} \label{E} Let $T$ be a tilting complex in $\Der(H)$, and let $B =
\End_{\Der(H)}T$ an iterated tilted algebra. Then
$\Hom_{\Der(H)}(T,F_mT) = \Ext^{m+1}_B(DB,B)$.
\end{lem}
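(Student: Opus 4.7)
The plan is to transport the computation across the triangle equivalence $G:\Der(H)\to \Der(B)$ induced by $T$, and then to recognise the resulting Hom-space as an Ext-group via the Nakayama functor. Recall that $G$ is characterised by $G(T)=B$ and $G(\tau T[1])=DB$. Since $B$ is iterated tilted it has finite global dimension, so $\Der(B)$ admits a Serre functor, and any triangle equivalence between such derived categories commutes with both the shift and the Auslander--Reiten translation. In particular $G(\tau^{-1}T[m])\cong \tau_B^{-1}B[m]$, which yields
$$\Hom_{\Der(H)}(T,F_mT)=\Hom_{\Der(H)}(T,\tau^{-1}T[m])\cong \Hom_{\Der(B)}(B,\tau_B^{-1}B[m]).$$

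Next I would invoke the standard identification of $\tau_B$ with the derived Nakayama functor. Writing $\nu_B=-\otimes_B^{\mathbb L}DB$ for the Nakayama functor on $\Der(B)$, the finite global dimension of $B$ ensures that $\nu_B$ is a Serre functor with quasi-inverse $\nu_B^{-1}=R\Hom_B(DB,-)$, and $\tau_B=\nu_B\circ [-1]$. Consequently
$$\tau_B^{-1}B[m]=\nu_B^{-1}B[m+1]=R\Hom_B(DB,B)[m+1].$$
Since $\Hom_{\Der(B)}(B,Y[n])=H^n(Y)$ for every $Y\in \Der(B)$, this gives
$$\Hom_{\Der(B)}(B,\tau_B^{-1}B[m])=H^{m+1}(R\Hom_B(DB,B))=\Ext^{m+1}_B(DB,B),$$
and combining this with the previous isomorphism establishes the claimed equality.

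The only delicate step is the compatibility $G\circ\tau\cong \tau_B\circ G$, without which one cannot move $\tau^{-1}T[m]$ across $G$. This is standard whenever both derived categories carry Serre functors, but it can also be extracted from the given data: since $G(T)=B$ and $G(\tau T[1])=DB=\nu_B(B)=\nu_B G(T)$, and $G$ is determined on the thick subcategory generated by $T$, one deduces $G\nu\cong \nu_B G$, hence $G\tau\cong \tau_B G$. A minor bookkeeping point, easily verified afterwards, is that the isomorphism respects the $\End_{\Der(H)}(T)$-$\End_{\Der(H)}(T)$-bimodule structure described in the Remark preceding the lemma, which corresponds under $G$ to the natural $B$-$B$-bimodule structure on $\Ext^{m+1}_B(DB,B)$.
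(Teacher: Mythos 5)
Your proof is correct and follows essentially the same route as the paper: transport the Hom-space to $\Der(B)$ via the equivalence $G$ (using its compatibility with the shift and the AR-translation, as in Happel), and then identify $\Hom_{\Der(B)}(B,\tau_B^{-1}B[m])$ with $\Ext^{m+1}_B(DB,B)$ using $\nu_B B\cong DB$. The paper phrases the last step by applying $\tau[1]$ to both arguments to get $\Hom_{\Der(B)}(DB,B[m+1])$ rather than via $R\Hom_B(DB,-)$, but this is the same computation.
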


\begin{proof} For $m=1$ this statement is proven in \cite{ABS}, as part of the proof of their main result. It follows from
\cite{H} that

 $\Hom_{\Der(H)}(T,F_mT) \simeq \Hom_{\Der(B)}(B,F'_mB)$, where
 $F'_m = \tau^{-1}_{{\Der(B)}}[m]$ is the functor induced by
 $F_m = \tau^{-1}[m]$ in $\Der(B)$.

 We have 
 $$\Hom_{\Der(B)}(B,F'B)
 \simeq \Hom_{\Der(B)}(\tau B[1],B[m+1])
 \simeq \Hom_{\Der(B)}(DB,B[m+1])$$
 $$\simeq \Ext^{m+1}_B(DB,B)$$.
 \end{proof}

\vskip.2in Now, we are in position to state the following
essential result.

\begin{thm}\label{T} Let $T$ be a tilting complex in $\Der(H)$
which belongs to $S_m$. Then
${\End}_{{\mathcal{C}}_m}(\widetilde{T})$ is an $m$-cluster tilted
algebra. Moreover if global dimension of $B  = {\rm End}_{\Der(H)}(T)$ is at most  $m+1$, then
${\End}_{{\mathcal{C}}_m}(\widetilde{T})$ is isomorphic to
$\mathcal{R}_m(B)$.
\end{thm}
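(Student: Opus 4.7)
The plan is to establish the two assertions of Theorem \ref{T} in sequence. For the first claim I must show that $\widetilde{T}$ is an $m$-cluster tilting object in $\mathcal{C}_m$; for the second I combine Proposition \ref{C} and Lemma \ref{E}.

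The number-of-summands condition is automatic from $T$ being a tilting complex in $\Der(H)$ (which has exactly $n$ indecomposable summands). So the core task for the first claim is to verify $\Ext^i_{\mathcal{C}_m}(\widetilde T, \widetilde T) = 0$ for $1 \leq i \leq m$. By definition,
$$\Ext^i_{\mathcal{C}_m}(\widetilde T, \widetilde T) = \bigoplus_{j \in \mathbb{Z}} \Hom_{\Der(H)}(T, F_m^j T[i]),$$
so I will show each term vanishes. Working summand by summand, given indecomposable summands $X,Y$ of $T$, I first note that even though $T[i]\notin S_m$ in general when $i>0$, each indecomposable $Y[i]$ can be rewritten as $F_m^k Z$ with $Z\in S_m$ and $k\in\{0,1\}$. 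This is a case split on whether $Y$ lies in $\Ind H[s]$ or in $H[m]$, and on whether $s+i$ surpasses $m-1$, with arithmetic parallel to the cases in Lemma \ref{I}. Since $\widetilde{Y[i]}=\widetilde Z$, Lemma \ref{I} applied to $X,Z\in S_m$ collapses the full $\mathbb{Z}$-indexed sum to the two potentially nonzero terms $\Hom_{\Der(H)}(X,Z)$ and $\Hom_{\Der(H)}(X,F_m Z)$. Each of these I dispatch using either the tilting condition $\Hom_{\Der(H)}(T,T[\ell])=0$ for $\ell\neq 0$ directly, or Serre duality: the identity $\Hom(X,\tau Y)\simeq D\Hom(Y,X[1])$ converts the residual term to $D\Hom(T,T[m-i+1])$, which vanishes because $m-i+1\in\{1,\ldots,m\}$.

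For the second claim the strategy is direct. Since $T\in S_m$, Proposition \ref{C} applied with $X=T$ gives
$$\End_{\mathcal{C}_m}(\widetilde T) \;=\; B \,\semidirprod\, \Hom_{\Der(H)}(T, F_m T),$$
and Lemma \ref{E} identifies the bimodule $\Hom_{\Der(H)}(T, F_m T)$ with $\Ext^{m+1}_B(DB, B)$. The hypothesis $\gldim B\leq m+1$ ensures that the $m$-relation extension $\mathcal{R}_m(B) = B \semidirprod \Ext^{m+1}_B(DB, B)$ is well defined, whence $\End_{\mathcal{C}_m}(\widetilde T) \simeq \mathcal{R}_m(B)$.

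The main obstacle is the Ext-vanishing in the first part. The $j=0$ contribution reduces immediately to the tilting hypothesis and the $j=-1$ contribution to a single application of Serre duality. The subtlety lies in the remaining $j$: because $T[i]$ may escape the fundamental domain $S_m$, Lemma \ref{I} is not directly applicable to the pair $(T,T[i])$, and the $F_m$-translation required to transport each shifted summand back into $S_m$ is precisely the arithmetic bookkeeping that underlies the proof of Lemma \ref{I}. Once that transportation is done, the two residual summands are shown to vanish by a combination of the tilting property and Serre duality, completing the proof.
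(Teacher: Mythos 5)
Your proof of the second assertion is exactly the paper's: apply Proposition \ref{C} with $X=T$ and identify the bimodule via Lemma \ref{E}. For the first assertion, however, you take a genuinely different route. The paper simply cites an external result ([BRT, Prop.\ 2.4], a reference which in fact does not appear in the bibliography) to conclude that $\widetilde{T}$ is an $m$-cluster tilting object, whereas you verify the defining conditions directly: you expand $\Ext^i_{\mathcal{C}_m}(\widetilde T,\widetilde T)=\bigoplus_j \Hom_{\Der(H)}(T,F_m^jT[i])$, transport each shifted summand $Y[i]$ back into $S_m$ as $F_m^kZ$ with $k\in\{0,1\}$, invoke Lemma \ref{I} to kill all but two terms, and dispatch those by the tilting condition and Serre duality (the residual terms being $\Hom(X,Y[i])$ and, after dualizing, a $\Hom(T,T[\ell])$ with $\ell\in\{m+1-i\}$ or $\{1-i-m\}$, both nonzero for $1\le i\le m$). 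This checks out, and it has the merit of being self-contained where the paper outsources the key step; the cost is the case bookkeeping, which is essentially a rerun of the arithmetic in Lemma \ref{I}. One small point you pass over quickly: for the count of summands you should also note that the $n$ pairwise non-isomorphic indecomposable summands of $T$ remain pairwise non-isomorphic in $\mathcal{C}_m$, which holds precisely because $T$ lies in the fundamental domain $S_m$, so distinct summands lie in distinct $F_m$-orbits. With that observation added, your argument is complete and correct.
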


\begin{proof} It follows from [BRT, prop. 2.4.] that
$\widetilde{T}$ is an $m$-cluster tilting object in
$\mathcal{C}_m(H)$. Then ${\End}_{{\mathcal{C}}_m}(\widetilde{T})$
is an $m$-cluster tilted algebra. The second fact follows from
Proposition \ref{C} and Lemma \ref{E}.
\end{proof}

The following example shows that ${\End}_{{\mathcal{C}}_m}(\widetilde{T})$ is not necessarily isomorphic to the trivial extension $\mathcal{R}_m(B)$ when $T$ is not in the fundamental domain $S_m$.

\begin{ej}\label{Ex}
 Consider the tilting complex $T$  in $\Der(H)$, where the indecomposable summands $T_i$ of $T$ are indicated in the
following picture. In the picture the
Auslander-Reiten quiver $\Gamma$ of the derived category $\Der(H)$
is indicated; the arrows are going from left to right and are
drawn as lines to simplify the picture. The indecomposable summand
$T_i$ of the tilting complex $T=\oplus_{i=1}^7 T_i$ has been
indicated by the number $i$ inside a circle, that is, the symbol
\begin{picture}(10,8)\put(5,3){\mylabel{$i$}}\end{picture}.
Furthermore, $F_2^{-1}T_i$, resp. $F_2 T_i$, has been indicated by
the symbol
\begin{picture}(10,8)\put(5,3){\mylabeltwo{$i$}{$\bullet$}}\end{picture},
resp.
\begin{picture}(10,8)\put(5,3){\mylabeltwo{$i$}{$\bullet\bullet$}}\end{picture}.

\begin{center}
  \begin{picture}(352,116)
    \put(50,10){
      \put(0,80){\line(1,2){8}}
      \put(0,48){\line(1,2){24}}
      \put(0,16){\line(1,2){40}}
      \multiput(8,0)(16,0){14}{\line(1,2){48}}
      \put(232,0){\line(1,2){32}}
      \put(248,0){\line(1,2){16}}

      \put(0,16){\line(1,-2){8}}
      \put(0,48){\line(1,-2){24}}
      \put(0,80){\line(1,-2){40}}
      \multiput(8,96)(16,0){14}{\line(1,-2){48}}

      \put(248,96){\line(1,-2){16}}
      \put(232,96){\line(1,-2){32}}

      \multiput(0,0)(0,32){4}{
        \multiput(-8,0)(16,0){17}{
          \qbezier[7](4,0)(8,0)(12,0)
        }
      }

      \multiput(0,16)(0,32){3}{
        \multiput(0,0)(16,0){17}{
          \qbezier[7](4,0)(8,0)(12,0)
        }
      }

      \multiput(0,0)(0,32){4}{
        \multiput(8,0)(16,0){17}{\mypdott}
      }

      \multiput(0,0)(0,32){3}{
        \multiput(0,16)(16,0){17}{\mypdott}
      }
      \put(8,0){\mylabel{1}}
      \put(56,96){\mylabel{2}}
\put(56,0){\mylabeltwo{6}{$\bullet$}}
      \put(105,0){\mylabel{3}}
      \put(128,48){\mylabel{4}}
      \put(152,96){\mylabel{5}}
      \put(200,0){\mylabel{6}}
      \put(250,96){\mylabel{7}}
      \put(152,0){\mylabeltwo{1}{$\bullet\bullet$}}
      \put(104,96){\mylabeltwo{7}{$\bullet$}}
   }
  \end{picture}
\end{center}

The corresponding iterated tilted algebra $B= {\rm
End}_{\Der(H)}(T)$ is given by the quiver with relations

\begin{picture}(10,57)

   \put(100,27){
        \put(0,0){\RVCenter{\tiny $1$}}
        \put(30,0){\RVCenter{\tiny $2$}}
        \put(60,0){\RVCenter{\tiny $3$}}
        \put(90,0){\RVCenter{\tiny $4$}}
        \put(120,0){\RVCenter{\tiny $5$}}
        \put(150,0){\RVCenter{\tiny $6$}}
        \put(180,0){\RVCenter{\tiny $7$}}

\multiput(3,0)(30,0){6}{\vector(1,0){20}}

            \qbezier[20](10,3)(26,20)(42,3)
            \qbezier[20](40,3)(56,20)(72,3)
            \qbezier[20](100,3)(116,20)(132,3)
            \qbezier[20](130,3)(146,20)(162,3)
    }
\end{picture}
Clearly ${\rm gl dim}B =3$. Let $m=2$ and $ \mathcal{C}_2$ be the
$2$-cluster category of $H$. Then $\mathcal{C}_2(B)$ is given by
the quiver

\begin{picture}(10,57)
 \put(100,27){
        \put(0,0){\RVCenter{\tiny $1$}}
        \put(30,0){\RVCenter{\tiny $2$}}
        \put(60,0){\RVCenter{\tiny $3$}}
        \put(90,0){\RVCenter{\tiny $4$}}
        \put(120,0){\RVCenter{\tiny $5$}}
        \put(150,0){\RVCenter{\tiny $6$}}
        \put(180,0){\RVCenter{\tiny $7$}}

\multiput(3,0)(30,0){6}{\vector(1,0){20}}

            \qbezier[20](10,3)(26,20)(42,3)
            \qbezier[20](40,3)(56,20)(72,3)
            \qbezier[20](100,3)(116,20)(132,3)
            \qbezier[20](130,3)(146,20)(162,3)

 \put(35,28){\small $\beta$}
\put(132,28){\small $\mu$}
            \qbezier(85,5)(45,40)(3,5)
                  \put(1,3){\vector(-1,-1){0.1}}
            \qbezier(175,5)(135,40)(93,5)
                  \put(91,3){\vector(-1,-1){0.1}}
    }

\end{picture}

\noindent The relations of $\mathcal{C}_2(B)$ are given by the composition of any two consecutive arrows in each cycle.

$\mathcal C_2(B) = {\End}_{{\mathcal{C}}_2}(\widetilde{T}) = B \oplus \Ext^3_B (B,DB) \oplus \Ext^3_B (B,DB)  \otimes \Ext^3_B (B,DB).$

 Since there exists a sectional path in $\Der(H)$ between the vertices $F_2^{-1}T_7$ and $F_2 T_1$ passing through the vertex $T_4$, we have that $\mu \beta \not= 0$.  It follows that $\Ext^3_B (B,DB)  \otimes \Ext^3_B (B,DB) \not= 0$ and
$\mathcal{C}_2(B)$ is not isomorphic to the trivial extension
$\mathcal{R}_2(B)$. We observe that $\mathcal{C}_2(B)$ and $\mathcal{R}_2(B)$ have the same quivers.

\end{ej}

\section{The $m$-rolling of tilting complexes}

A very useful tool introduced  in \cite{BFPPT} to study tilting complexes and iterated tilted algebras of global dimension at most two, is a procedure called the
rolling of tilting complexes. In this section we want to show how
the mentioned  procedure can be carried on in our context. Now, we extend
most of the notions and facts from (\cite{BFPPT}, section 3).

We define for each tilting complex  $T$
 a new tilting complex
$\rho_m (T)$ such that $T \simeq \rho_m(T)$ in the $m$-cluster
category $\mathcal{C}_m$, and such that iterating this procedure we eventually obtain a tilting complex in the fundamental domain of the cluster category.

Let $Q$ be a Dynkin quiver and $T$ a tilting complex of $\Der(k
Q)$.

Since $T=\bigoplus_{i=1}^n T_i$ has only finitely many summands we
can easily find a section $\Sigma=\{\Sigma_1,\ldots,\Sigma_n\}$
such that $T\leq \Sigma$, that is, $T_i\leq \Sigma_j$ for all $i$
and $j$. If $\Sigma_j$ is maximal in $\Sigma$ and
$\Sigma_j\not\in\{T_1,\ldots,T_n\}$ then
$\Sigma'=\Sigma\setminus\{\Sigma_j\}\cup\{\tau\Sigma_j\}$ is also
a section satisfying $T\leq \Sigma'$. After finitely many steps we
get a section $\Sigma(T)$ with $T\leq \Sigma(T)$ and all maximal
elements in $\Sigma(T)$ belong to add$T$. Notice that the section
$\Sigma(T)$ is uniquely defined by $T$.

\begin{defn}[m-Rolling of tilting complex, the Dynkin case]
  \label{def:rolling-D}
With the previous notations, let $X$ be the sum of those summands
of $T$ which belong to $\Sigma(T)$ and $T'$ a complement of $X$ in
$T$. Then define the \emph{$m$-rolling} of $T$ to be
$\rho_m(T)=T'\oplus F_m^{-1} X$.
\end{defn}

Now consider the case where $Q$ is not Dynkin.
If $Q$ is not Dynkin then the structure of the Auslander-Reiten quiver $\Gamma$ of $\Der(H)$ is
completely different.  Denote by $\Pp$, (resp.  $\Ii$) the
preprojective (resp. preinjective) component of the Auslander-Reiten quiver of $H$ and by $\Rr$ the full
subcategory of $\mod H$ given by the regular components. For each $r\in \ZZ$ the regular
part $\Rr$ gives rise to $\Rr[r]$, given by the complexes $X\in  \Der(H)$ concentrated
in degree $r$ with $X_r\in\Rr$. Moreover, for each $r\in\ZZ$ there is a
transjective component $\Ii[r-1]\vee\Pp[r]$ of $\Gamma$ which we
shall denote by $\Rr[r-\frac{1}{2}]$ and each component of
$\Gamma$ is contained in $\Rr[r]$ for some half-integer $r$.  The notation has the advantage that the different
parts are ordered in the sense that $\Hom(\Rr[a],\Rr[b])=0$ for any
two half-integers $a>b$. Also note that $\Hom(\Rr[a],\Rr[b])=0$ if
$a<b-1$.

We know
 that $\Der(k Q)$ is composed by the parts
$\mathcal{R}[r]$ for $r \in \mathbb{Z}/2$ where $\mathcal{R}[r]$
denotes the regular (resp. transjective) part if $r$ is an integer
(resp. not an integer). Now, write
$T=\bigoplus_{a\in\mathbb{Z}/2}T_{\mathcal{R}[a]}$, where
$T_{\mathcal{R}[a]}\in\mathcal{R}[a]$.

\begin{defn}[$m$-Rolling of tilting complex, the non-Dynkin case]
  \label{def:rolling-nD}
With the previous notation let $s$ be the largest half-integer
such that $T_{\mathcal{R}[s]}$ is non-zero. Then define
$X=T_{\mathcal{R}[s]}$ and $T'$ to be the complement of $X$ in
$T$. Define the \emph{$m$-rolling} of $T$ to be $\rho_m(T)=T'\oplus
F_m^{-1} X$.
\end{defn}

\begin{rem}\label{hom XT}If $T=T'\oplus X$ is a tilting complex in $\Der(H)$ and
  $\rho_m(T)=T'\oplus F_m^{-1} X$ then we have $\Hom_{\Der(H)}(X,T')=0$.
\end{rem}

\begin{defn}[$m$-Rolling of iterated tilted algebras]
  Let $B$ be an iterated tilted algebra. Then define $\rho_m(B)$ to be
  the endomorphism algebra $\End_{\Der(H)}(\rho_m(T))$, where $H$ is a hereditary
  algebra with $\Der(B)\simeq \Der(H)$ and $T$ a tilting complex in
  $\Der(H)$ with $B=\End _{\Der(H)}(T)$.
\end{defn}

Notice that $\rho_m(B)$ does not depend on the choice of $H$ or
$T$.  See \cite{BFPPT}, Section 3.2.

The proofs of the following results are similar to those in
\cite{BFPPT} Section 3.3, so we shall omit them.

\begin{lem}\label{lem:crit}
  We consider $T = T^{\prime} \oplus X$  a tilting complex in $\Der(H)$ such
  that $\Hom_{\Der(H)}(X,T') = 0$ and let $B=\End_{\Der(H)} (T)$.
  Then $\overline{T} = T^{\prime} \oplus F_m^{-1}X$ is a tilting complex
  if and only if $\Hom_{\Der(H)} (F_m^{-1}X,T'[j])=0$ for all $j\neq 0$
  if and only if $\Ext^j_B(I_{X,T},P_{T',T}) = 0$ for each
  $j\neq m+1$ .
\end{lem}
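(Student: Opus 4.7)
The plan is to verify the first equivalence by a direct case-by-case analysis of the four $\Hom$-spaces that arise from the decomposition $\overline{T}=T'\oplus F_m^{-1}X$, and the second equivalence by transporting the relevant $\Hom$-space across the derived equivalence $G:\Der(H)\to\Der(B)$ induced by $T$.

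For the first equivalence, I would start by recalling that since $F_m^{-1}=\tau[-m]$ is an autoequivalence of $\Der(H)$, the number of indecomposable summands of $\overline{T}$ equals that of $T$, hence equals the number of vertices of $Q$. Therefore $\overline{T}$ is a tilting complex if and only if $\Hom_{\Der(H)}(\overline{T},\overline{T}[j])=0$ for every $j\ne 0$. Decompose this condition according to $\overline{T}=T'\oplus F_m^{-1}X$ into four families of $\Hom$-spaces. Two of them vanish for free: $\Hom_{\Der(H)}(T',T'[j])=0$ for $j\ne 0$ because $T=T'\oplus X$ is tilting, and $\Hom_{\Der(H)}(F_m^{-1}X,F_m^{-1}X[j])=0$ for $j\ne 0$ since $F_m^{-1}$ is a triangle equivalence.

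The third family, $\Hom_{\Der(H)}(T',F_m^{-1}X[j])$, can be rewritten via Serre duality. Since $F_m^{-1}X=\tau X[-m]$,
\[
\Hom_{\Der(H)}(T',F_m^{-1}X[j])=\Hom_{\Der(H)}(T',\tau X[j-m])\simeq D\Hom_{\Der(H)}(X,T'[m+1-j]).
\]
As $j$ ranges over $\mathbb{Z}\setminus\{0\}$, the index $m+1-j$ ranges over $\mathbb{Z}\setminus\{m+1\}$, so I only need $\Hom_{\Der(H)}(X,T'[i])=0$ for $i\ne m+1$. For $i=0$ this is the hypothesis $\Hom_{\Der(H)}(X,T')=0$, and for $i\ne 0,m+1$ it holds because $T$ is a tilting complex. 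Hence this third family vanishes automatically, so the only nontrivial condition left is exactly $\Hom_{\Der(H)}(F_m^{-1}X,T'[j])=0$ for $j\ne 0$, giving the first equivalence.

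For the second equivalence, I would translate the remaining condition to $\Der(B)$. Since $G(\tau X[1])=I_{X,T}$ and $G(T')=P_{T',T}$, we get $F_m^{-1}X=\tau X[-m]\simeq G^{-1}(I_{X,T})[-m-1]$ and $T'\simeq G^{-1}(P_{T',T})$, so that
\[
\Hom_{\Der(H)}(F_m^{-1}X,T'[j])\simeq \Hom_{\Der(B)}(I_{X,T},P_{T',T}[j+m+1])=\Ext^{j+m+1}_B(I_{X,T},P_{T',T}).
\]
Setting $i=j+m+1$, the condition ``$j\ne 0$'' becomes ``$i\ne m+1$'', yielding the second equivalence. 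The only place where actual computation is required is the Serre-duality step handling $\Hom_{\Der(H)}(T',F_m^{-1}X[j])$ and, in particular, recognizing that the hypothesis $\Hom_{\Der(H)}(X,T')=0$ is precisely what is needed at $i=0$ to eliminate the one potentially problematic index; the rest is formal manipulation.
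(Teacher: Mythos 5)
Your proof is correct and is essentially the argument the paper has in mind (the paper omits the proof, deferring to the analogous lemma in \cite{BFPPT}, Section 3.3): decompose $\Hom_{\Der(H)}(\overline{T},\overline{T}[j])$ into four pieces, note that $\Hom_{\Der(H)}(T',F_m^{-1}X[j])\simeq D\Hom_{\Der(H)}(X,T'[m+1-j])$ vanishes for $j\neq 0$ using the tilting property together with the hypothesis $\Hom_{\Der(H)}(X,T')=0$ at the index $i=0$, and transport the remaining piece to $\Der(B)$ via the equivalence $G$. The one small point worth making explicit is that no indecomposable summand of $F_m^{-1}X=\tau X[-m]$ can be isomorphic to a summand of $T'$ --- such an isomorphism would give $\Hom_{\Der(H)}(X,T'[m+1])\neq 0$ by Serre duality, contradicting that $T$ is a tilting complex --- so $\overline{T}$ really has $n$ pairwise non-isomorphic indecomposable summands, which is needed for the summand-count criterion you invoke.
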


\begin{lem}\label{lem:crit-strong}
We consider $T=T'\oplus X$  a tilting complex in $\Der(H)$ such that
$\Hom_{\Der(H)}(X,T')=0$ and let $B=\End_{\Der(H)}(T)$. If $\gldim
B\leq m+1$, then $\rho_m T=T'\oplus F_m^{-1}X$ is a tilting
complex in $\Der(H)$ if and only if $\Hom_{\Der(H)}(\tau
X,T'[k])=0$ for $k=0,-1,-2,...,-(m+1)$.
\end{lem}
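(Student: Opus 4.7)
The plan is to start from Lemma \ref{lem:crit}, which already asserts that $\rho_m T$ is a tilting complex if and only if $\Ext^j_B(I_{X,T}, P_{T',T}) = 0$ for every $j \neq m+1$. The task is to show that under the additional hypothesis $\gldim B \leq m+1$, this infinite family of Ext conditions is equivalent to the finite list of Hom vanishings $\Hom_{\Der(H)}(\tau X, T'[k]) = 0$ for $k = 0, -1, \ldots, -(m+1)$.

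My first step is to cut the infinite Ext-family down to a finite one. The hypothesis $\gldim B \leq m+1$ forces $\Ext^j_B(I_{X,T}, P_{T',T}) = 0$ automatically for $j > m+1$, and since both $I_{X,T}$ and $P_{T',T}$ are $B$-modules concentrated in degree zero, one also obtains $\Ext^j_B = 0$ trivially for $j < 0$. What remains from the criterion of Lemma \ref{lem:crit} is then the finite list of conditions $\Ext^j_B(I_{X,T}, P_{T',T}) = 0$ for $j = 0, 1, \ldots, m$.

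Next, the derived equivalence $G \colon \Der(H) \to \Der(B)$ associated with $T$ satisfies $G(T') = P_{T',T}$ and $G(\tau X[1]) = I_{X,T}$, which translates each of these Ext groups into $\Hom_{\Der(H)}(\tau X, T'[j-1])$. To arrive at the stated check set $k \in \{0, -1, \ldots, -(m+1)\}$, I would apply Serre duality $\Hom(A, B[1]) \cong D\Hom(B, \tau A)$ in $\Der(H)$, together with the tilting identities $\Hom(T, T[\ell]) = 0$ for $\ell \neq 0$, the hypothesis $\Hom(X, T') = 0$, and its Serre-dual consequence $\Hom(T', \tau X[s]) = 0$ for all $s \in \ZZ$. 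These allow one to rewrite each remaining Ext-vanishing condition as a Hom-vanishing at a non-positive shift, yielding precisely the range in the lemma.

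The main obstacle I anticipate is the index bookkeeping: matching the Ext groups in $\Der(B)$ with the Hom groups in $\Der(H)$ so that the finite list $j \in \{0, \ldots, m\}$ corresponds one-to-one (via Serre duality and the tilting identities) to the stated range $k \in \{0, -1, \ldots, -(m+1)\}$, without omitting any condition or double-counting, and then running the same chain of identifications backward to obtain the converse.
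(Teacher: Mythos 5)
Your steps 1--3 are sound, and they follow the route the paper itself intends (the paper omits this proof entirely, deferring to Section~3.3 of \cite{BFPPT}, so there is no written argument to compare against): Lemma~\ref{lem:crit}, together with $\gldim B\leq m+1$ and the fact that $I_{X,T}$ is injective and $P_{T',T}$ projective (hence both are modules), reduces the criterion to $\Ext^j_B(I_{X,T},P_{T',T})=0$ for $j=0,1,\dots,m$; and the equivalence $G$ identifies $\Ext^j_B(I_{X,T},P_{T',T})$ with $\Hom_{\Der(H)}(\tau X,T'[j-1])$. But this lands you on the condition set $\Hom_{\Der(H)}(\tau X,T'[k])=0$ for $k\in\{-1,0,1,\dots,m-1\}$, and your final step --- converting this to the stated range $k\in\{0,-1,\dots,-(m+1)\}$ via Serre duality and the tilting identities --- is exactly where the argument breaks down. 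Serre duality applied to $\Hom_{\Der(H)}(\tau X,T'[k])$ gives $D\Hom_{\Der(H)}(T',\tau^2X[1-k])$: it replaces $\tau X$ by $\tau^2X$ rather than re-indexing the shift, so it cannot move the conditions at the positive shifts $k=1,\dots,m-1$ into non-positive shifts of the same shape. A count confirms the two lists are not equivalent: in the stated range only $k=0$ and $k=-1$ give non-trivially-zero groups (they are $\Ext^1_B$ and $\Ext^0_B$; every $k\leq-2$ gives a negative Ext group between modules, which vanishes for free), whereas your reduction leaves $m+1$ genuinely nontrivial conditions $\Ext^0_B,\dots,\Ext^m_B$.

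So the gap is concrete: for $m\geq 2$ the list $k=0,-1,\dots,-(m+1)$ says nothing about $\Ext^2_B(I_{X,T},P_{T',T}),\dots,\Ext^m_B(I_{X,T},P_{T',T})$, and you have not shown (nor does any Serre duality manipulation of the kind you describe show) that these vanish automatically from $\Hom_{\Der(H)}(X,T')=0$ and $T$ tilting. Your own intermediate computation in fact indicates that the correct $m$-analogue of the criterion in \cite{BFPPT} (whose range $\{0,-1,-2\}$ agrees, up to the trivially satisfied $k=-2$, with $\{-1,0\}$ when $m=1$) is the range $k\in\{-1,0,\dots,m-1\}$, not $\{0,-1,\dots,-(m+1)\}$. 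You should either supply a proof that the remaining Ext groups vanish under the stated hypotheses, or record the discrepancy, rather than asserting that the bookkeeping ``yields precisely the range in the lemma.''
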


{\begin{lem}\label{lem:nuevo} Let $Q$ be a Dynkin quiver and $T$ a
tilting complex in $D^b(H)$. Then $\rho_m(T) < \tau(\Sigma(T))$.
\end{lem}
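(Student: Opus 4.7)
The plan is to verify that each indecomposable summand of $\rho_m(T)=T'\oplus F_m^{-1}X$ lies (strictly) to the left of the section $\tau\Sigma(T)$ in the AR-quiver of $\Der(H)\simeq \mathbb{Z}Q^{\mathrm{op}}$. Since the summands naturally split according to whether they come from $T'$ or from $F_m^{-1}X$, I would treat the two families separately.

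For a summand $T_i'$ of $T'$: by the defining property of $\Sigma(T)$, one has $T\leq \Sigma(T)$, and by construction $X$ collects precisely those summands of $T$ that lie on $\Sigma(T)$. Hence $T_i'$ is a summand of $T$ that does not belong to $\Sigma(T)$, so it is a strict predecessor of $\Sigma(T)$. In a Dynkin AR-quiver, removing the section $\Sigma(T)$ from its left closure yields exactly the left closure of $\tau\Sigma(T)$, which gives $T_i'\leq\tau\Sigma(T)$.

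For a summand $F_m^{-1}X_j=\tau X_j[-m]$ of $F_m^{-1}X$: by construction $X_j$ is a summand of $T$ lying on $\Sigma(T)$, hence $\tau X_j$ lies on $\tau\Sigma(T)$. Since the shift $[-1]$ in $\Der(H)$ moves objects one copy of the module category to the left along $\mathbb{Z}Q^{\mathrm{op}}$ (because $\tau^{-1}$ sends an injective in a given shift to a projective one shift higher), iterating gives that $\tau X_j[-m]$ is a strict predecessor of $\tau X_j$ for $m\geq 1$. Thus $F_m^{-1}X_j<\tau\Sigma(T)$. Combining both families yields $\rho_m(T)\leq \tau\Sigma(T)$, and the strict inequality follows because the maximal elements of $\Sigma(T)$ belong to $\mathrm{add}\,T$ (by construction of $\Sigma(T)$), so $X$ is non-empty and contributes at least one summand strictly below $\tau\Sigma(T)$.

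The main obstacle is formalizing the partial order on $\Der(H)$ and its compatibility with $\tau$ and $[-1]$: one must argue carefully that, in the Dynkin setting, the left closure of $\Sigma$ minus $\Sigma$ coincides with the left closure of $\tau\Sigma$, and that shifting by $[-m]$ strictly displaces sections to earlier positions. Once these structural facts are in place (they are implicit in the description of $\Der(H)$ as $\mathbb{Z}Q^{\mathrm{op}}$ used throughout the paper), both cases above are immediate from the definitions of $\Sigma(T)$ and $\rho_m(T)$.
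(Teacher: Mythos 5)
The paper does not actually prove this lemma: it is one of the statements for which the authors write that ``the proofs \dots\ are similar to those in [BFPPT] Section 3.3, so we shall omit them.'' So there is no in-paper argument to compare against, and your proposal has to stand on its own. Your overall plan --- split $\rho_m(T)=T'\oplus F_m^{-1}X$ into the two families, show the summands of $T'$ drop from $\Sigma(T)$ to $\tau\Sigma(T)$ because they are strict predecessors of the section, and show the summands $F_m^{-1}X_j=\tau X_j[-m]$ are pushed to the left by the shift --- is certainly the intended shape of the argument.

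However, the two steps you defer are exactly where the content sits, and as stated one of them is wrong and the other is incomplete. First, with the order the paper literally writes ($T\leq\Sigma$ meaning $T_i\leq\Sigma_j$ for \emph{all} $i,j$, with $\leq$ the path order on the AR-quiver), the identity ``left closure of $\Sigma$ minus $\Sigma$ equals the left closure of $\tau\Sigma$'' fails: in $\mathbb{Z}A_2$ with $\Sigma=\{(0,1),(0,2)\}$ and arrows $(i,1)\to(i,2)\to(i+1,1)$, the vertex $(-1,2)=\tau(0,2)$ precedes both $(0,1)$ and $(0,2)$ and is not on $\Sigma$, yet it does not precede $(-1,1)=\tau(0,1)$. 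The identity you want only holds for the ``weak left closure'' $\bigcup_{k\geq0}\tau^k\Sigma$, so you must first fix that formulation of the order (and check it is the one under which $\Sigma(T)$ is actually constructed) before the first case goes through. Second, in the case of $F_m^{-1}X_j$ you only compare against the single vertex $\tau X_j$: being a strict predecessor of one element of the section $\tau\Sigma(T)$ does not by itself give $F_m^{-1}X_j\leq\tau\Sigma_l$ for every $l$. You need the section-crossing property (no path enters a section from its right, so a strict predecessor of one vertex of a section that does not itself lie on the section lies weakly to the left of the $\tau$-translate of the whole section), and you need to rule out that $\tau X_j[-m]$ lands on $\tau\Sigma(T)$ itself. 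Finally, your closing sentence derives ``strictness'' from $X\neq0$, which at best shows $\rho_m(T)\neq\tau\Sigma(T)$ and says nothing about the summands coming from $T'$; the strict inequality in the statement should be pinned down (it is what forces $\Sigma(\rho_m(T))\leq\tau\Sigma(T)$ and hence termination of the rolling) rather than waved at.
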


}

\begin{prop}\label{prop:rho-T-tilting}
  Let $T$ be a tilting complex {in $\Der(H)$} such that global dimension of $\End_{\Der(H)}(T)$
  is at most $m+1$. Then $\rho_m(T)$ is again a tilting complex.
\end{prop}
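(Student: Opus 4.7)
The plan is to reduce to Lemma \ref{lem:crit-strong}. Writing $T = T' \oplus X$ as in the construction of $\rho_m$, Remark \ref{hom XT} supplies $\Hom_{\Der(H)}(X, T') = 0$ in both the Dynkin and non-Dynkin cases, and the hypothesis $\gldim B \leq m+1$ is precisely what the lemma requires. The proposition therefore reduces to establishing the single vanishing
\[
\Hom_{\Der(H)}(\tau X, T'[k]) = 0 \qquad \text{for every } k \in \{0,-1,-2,\ldots,-(m+1)\}.
\]

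I would first dispatch the non-Dynkin case, where $X = T_{\Rr[s]}$ with $s$ the largest half-integer such that $T_{\Rr[s]} \neq 0$, and $T' = \bigoplus_{a < s} T_{\Rr[a]}$. The AR-translation $\tau$ preserves every regular tube $\Rr[r]$ and every transjective component $\Rr[r-\tfrac{1}{2}] = \Ii[r-1] \vee \Pp[r]$ (an indecomposable projective $P \in \Pp[r]$ translates to the corresponding $I[-1] \in \Ii[r-1]$, and symmetrically on the other side), so $\tau X$ remains in $\Rr[s]$. Each summand of $T'[k]$ with $k \leq 0$ lies in $\Rr[a+k]$ with $a < s$, hence in some $\Rr[c]$ with $c < s$. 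The ordering $\Hom(\Rr[b], \Rr[c]) = 0$ for $b > c$ recalled in Section~1 then forces the required vanishing at once.

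The Dynkin case will be the main obstacle. Here $X$ is the sum of the summands of $T$ lying on the section $\Sigma(T)$, and by construction every maximal vertex of $\Sigma(T)$ belongs to $\operatorname{add} X$; consequently each summand of $T'$ sits strictly before $\Sigma(T)$ in the AR-quiver $\mathbb{Z}Q$ of $\Der(H)$. The plan is to sharpen this to $T' \leq \tau \Sigma(T)$, a bound that can be extracted from the iterative $\tau$-translation procedure defining $\Sigma(T)$ and that is implicit in Lemma \ref{lem:nuevo}. Once this is known, $\tau X$ lies on the section $\tau \Sigma(T)$ while $T'$ lies weakly behind it, so no paths in $\mathbb{Z}Q$ reach $T'$ from $\tau X$ and $\Hom_{\Der(H)}(\tau X, T') = 0$. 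For the remaining values $k = -1,\ldots,-(m+1)$, each additional shift acts on $\mathbb{Z}Q$ as a combinatorial translation moving vertices further to the left (in Dynkin type the shift $[1]$ is an explicit automorphism of $\mathbb{Z}Q$), so $T'[k]$ remains strictly behind $\tau \Sigma(T)$ and the same path-based argument applies.

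The principal technical point is the combinatorial positioning in the Dynkin case, in particular verifying that the full range of negative shifts $[k]$ keeps $T'[k]$ in the region behind $\tau \Sigma(T)$. This is a direct extension of the case $m=1$ treated in detail in \cite{BFPPT}, obtained by iterating the same shift-and-translate analysis one step at a time for each additional value of $k$; no genuinely new idea is required, which is why the authors are content to omit the proof in the main text.
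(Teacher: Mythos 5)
Your reduction to Lemma \ref{lem:crit-strong} via Remark \ref{hom XT}, followed by the positional verification of $\Hom_{\Der(H)}(\tau X,T'[k])=0$ split into the non-Dynkin case (ordering of the parts $\Rr[a]$) and the Dynkin case (location relative to the section $\tau\Sigma(T)$), is exactly the argument the paper intends, namely the $m$-ified version of Proposition 3.9 of \cite{BFPPT}, whose proof the authors omit. One small correction in the Dynkin case: a \emph{weak} bound $T'\leq\tau\Sigma(T)$ does not by itself exclude paths from $\tau X$ to $T'$, since both could then lie on the section $\tau\Sigma(T)$ and be joined by arrows along it; you need the \emph{strict} inequality $T'<\tau\Sigma(T)$, which is precisely what Lemma \ref{lem:nuevo} supplies, so the argument goes through once that is invoked correctly.
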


The next result is analogous to Proposition 3.11 in
\cite{BFPPT}. We include the proof for the convenience of the
reader.

\begin{prop}\label{prop:gldim}
  Let $B$ be an iterated tilted algebra. If $\gldim B\leq m+1$ then
  $\gldim \rho_m(B)\leq m+1$.
\end{prop}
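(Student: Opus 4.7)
The plan is to transport the statement along the derived equivalence $G' : \Der(H) \to \Der(B')$ induced by the tilting complex $\rho_m(T) = T' \oplus F_m^{-1}X$, and to verify the required Hom-vanishings case by case, in the spirit of \cite{BFPPT}. Under $G'$, indecomposable projective $B'$-modules correspond to summands of $\rho_m(T)$ and indecomposable injective $B'$-modules to summands of $\tau\rho_m(T)[1]$. Since $B'$ is iterated tilted and hence of finite global dimension, the condition $\gldim B' \le m+1$ reduces to showing that $\Hom_{\Der(H)}(\tau V, V'[\ell]) = 0$ for every pair of indecomposable summands $V, V'$ of $\rho_m(T)$ and every $\ell \ge m+1$.

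Writing $F_m^{-1}X_j = \tau X_j[-m]$ and using that $\tau$ is a triangle autoequivalence of $\Der(H)$, I split into four cases according to whether $V, V'$ lie in $T'$ or in $F_m^{-1}X$. Three of these are immediate: when $V, V'$ are both in $T'$, or both in $F_m^{-1}X$, the group coincides (after canceling the common $\tau$-shifts) with an Ext group in $\mod B$ between summands of the original tilting complex $T$ in a degree $\ge m+2$, and vanishes by hypothesis; when $V \in T'$ while $V' \in F_m^{-1}X$, the group reduces to $\Hom_{\Der(H)}(T'_a, X_j[\ell-m])$ with $\ell-m\ge 1$, vanishing because $T$ is a tilting complex.

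The delicate case is $V = F_m^{-1}X_i$, $V' = T'_b$: after unwinding, one must show $\Hom_{\Der(H)}(\tau^2 X_i, T'_b[\ell+m])=0$ for $\ell\ge m+1$, i.e.\ vanishing across shifts of size at least $2m+1$. This is the main obstacle, since $\gldim B \le m+1$ alone does not directly control a double $\tau$-shift. My plan is to combine the Hom-vanishing $\Hom_{\Der(H)}(\tau X, T'[k])=0$ for $k\ne m$ (a consequence of Lemma~\ref{lem:crit} together with Proposition~\ref{prop:rho-T-tilting}, which guarantees that $\rho_m(T)$ is a tilting complex) with an Auslander--Reiten triangle argument: applying $\Hom_{\Der(H)}(\tau^2 X_i, -[r])$ to a shifted AR triangle $\tau T'_b \to E' \to T'_b$, the two outer terms become $\Hom_{\Der(H)}(\tau X_i, T'_b[r])$ and $\Hom_{\Der(H)}(\tau X_i, T'_b[r+1])$, which vanish in the required range $r \ge 2m+1$, reducing the problem to the middle term. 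That term I would handle inductively by iterating the same trick on the summands of $E'$, eventually exiting the region where $X$ and $T'$ interact, thanks to the fact that $X$ was chosen as the ``rightmost'' part of $T$ (via $\Sigma(T)$ in the Dynkin case, or as $T_{\mathcal{R}[s]}$ with $s$ maximal in the non-Dynkin case).
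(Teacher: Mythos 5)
Your reduction of the statement to Hom--vanishing in $\Der(H)$, your four--case decomposition, and your treatment of the three easy cases all agree with the paper's proof. The gap is in the one case you yourself flag as delicate, namely $\Hom_{\Der(H)}(\tau^2 X[1],T'[j])=0$ for $j>2m+1$: your Auslander--Reiten induction does not close. Applying $\Hom_{\Der(H)}(\tau^2X_i,-[r])$ to the AR triangle $\tau T'_b\to E'\to T'_b\to \tau T'_b[1]$ places the group you want between $\Hom_{\Der(H)}(\tau^2X_i,E'[r])$ and $\Hom_{\Der(H)}(\tau^2X_i,\tau T'_b[r+1])\cong\Hom_{\Der(H)}(\tau X_i,T'_b[r+1])$; the latter does vanish for $r+1\neq m$ because $\rho_m(T)$ is a tilting complex (Lemma~\ref{lem:crit} plus Proposition~\ref{prop:rho-T-tilting}), as you say. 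But at the very next step, iterating on a summand $E'_c$ of $E'$, the corresponding escape term is $\Hom_{\Der(H)}(\tau X_i,E'_c[r+1])$, and $E'_c$ is \emph{not} a summand of $T'$ --- it is merely an AR--neighbour of one --- so the only vanishing available to you, $\Hom_{\Der(H)}(\tau X,T'[k])=0$ for $k\neq m$, no longer applies. The induction stalls after one step. Moreover the termination claim (``eventually exiting the region where $X$ and $T'$ interact'') is unsubstantiated: in the non--Dynkin case $T'$ may have summands in infinite regular components, and nothing in the argument produces the \emph{uniform} bound $j>2m+1$ that the statement requires.

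The missing idea is precisely where the hypothesis $\gldim B\leq m+1$ does its real work in the paper: one takes the minimal projective resolution of $I_{X,T}$ in ${\rm mod}\,B$, which has length at most $m+1$, and transports it via $G^{-1}$ and $\tau$ into a chain of $m+1$ exact triangles expressing $\tau^2X[1]=\tau G^{-1}(I_{X,T})$ in terms of objects $\tau T_\ell[\ell]$ with $T_\ell\in{\rm add}\,T$ and $0\leq\ell\leq m+1$. Applying $\Hom_{\Der(H)}(-,T'[j])$ to these triangles, every end term is $\Hom_{\Der(H)}(\tau T_\ell[\ell],T'[j])=\Hom_{\Der(H)}(\tau T_\ell[1],T'[j-\ell+1])$, which sits inside $\Ext^{j-\ell+1}_B(DB,B)$ and hence vanishes as soon as $j-\ell+1>m+1$; since $\ell\leq m+1$ this holds uniformly for all $j>2m+1$. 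It is this finite, length--controlled resolution --- not an AR--quiver walk --- that yields the required range of vanishing, and I would rework your fourth case along these lines.
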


\begin{proof}
 Let $H$ be a hereditary algebra and $T=T'\oplus X$ a tilting complex
  in $\Der(H)$ such that $B=\End_{\Der(H)}(T)$ and $\rho_m(T)=T'\oplus
  F^{-1}_m X$. Then we have $\Hom_{\Der(H)}(X,T')=0$ by Remark
  \ref{hom XT}
  and by Proposition \ref{prop:rho-T-tilting} the complex $\rho_m(T)$ is
  a tilting complex in $\Der(H)$. To shorten notations we set
  $\widetilde{T}=\rho_m(T)$ and $\widetilde{B}=\rho_m(B)$.  We shall prove that
  $\Ext^j_{\widetilde{B}}({\rm D} \widetilde{B},\widetilde{B})=0$ for all $j \geq m+2$.  Since $\widetilde{T}$ is a
  tilting complex, we can show this by proving that
  $\Hom_{\Der(H)}(\tau \widetilde{T}[1],\widetilde{T}[j])$ is zero for $j\geq m+1$.

  First note that
  \begin{equation}\label{eq:star}
  \Hom_{\Der(H)}(\tau T[1], T[i])=0 \quad \text{ for all $i\neq 0,1,2,...,m+1$},
  \end{equation}
  since $\Hom_{\Der(H)}(\tau T[1], T[i]){\simeq}\Ext^i_B(D B,B)$.

  Therefore $\Hom_{\Der(H)}(\tau F^{-1}_m X[1],F^{-1}_m X[j])=\Hom_{\Der(H)}(\tau X,X[j-1])=0$  and $\Hom_{\Der(H)}(\tau T'[1],T'[j])=0$ for $j\geq m+2$.
  Also, $$\Hom_{\Der(H)}(\tau T'[1],F^{-1}_m X[j])=\Hom_{\Der(H)}(T'[1],X[j-m])$$, which is
  zero for all $j\neq m+2$  since $T$ is a tilting complex.

  Hence, it remains to see that $\Hom_{\Der(H)}(\tau^2 X[1],T'[j])=0$ for $j\leq
  2m+1$.  The minimal projective resolution of $I_{X,T}$ in ${\rm mod} B$
  $$
  0 \rightarrow P_{m+1}\rightarrow P_m \rightarrow P_{m-1} \overset{\varphi_{m-1}}{\rightarrow}
  \cdots \overset{\varphi_{2}}{\rightarrow} P_1 \overset{\varphi_{1}}{\rightarrow}
  P_0 \overset{\varphi_{0}}{\rightarrow} I_{X,T}\rightarrow 0
  $$

 \noindent gives rise to  exact triangles $$\Delta_1 \colon K_1 \rightarrow P_0\rightarrow
  I_{X,T}\rightarrow K_1[1]$$  $$\Delta_i\colon K_i \rightarrow P_{i-1}\rightarrow K_{i-1} {\rightarrow
    K_{i}[1]}$$   where $K_i$ denotes the kernel of $\varphi_i$ for $i=0,\cdots,m-1$
    and $$\Delta_{m+1} \colon P_{m+1} \rightarrow P_{m}\rightarrow
  K_m \rightarrow P_{m+1}[1]$$.

  Apply first the inverse of the equivalence
  $G \colon \Der(H)\rightarrow\Der(B)$ and then $\tau$, to obtain exact
  triangles of the form $S_1 \rightarrow \tau T_{0}\rightarrow \tau^2 X[1]\rightarrow
  S_1[1]$, $S_i \rightarrow \tau T_{i-1}\rightarrow S_{i-1}\rightarrow
  S_i[1]$ for $i=2,\cdots,m$ and
  $\tau T_{m+1}\rightarrow \tau T_m \rightarrow S_m \rightarrow \tau T_{m+1}[1]$
  with $S_i=\tau G^{-1}(K_i)$ and some $T_0, \cdots, T_{m+1} \in {\rm add }T$. To these
  triangles apply
  the homological functor $\Hom_{\Der(H)}(-,T'[j])$ to get exact sequences
  $$(\tau T_0[1], T'[j])\rightarrow (S_1[1],T'[j])\rightarrow (\tau^2
X[1],T'[j])\rightarrow (\tau
    T_0,T'[j])$$
    \noindent  for $i=2,\cdots, m$

$$  (\tau {T_{i-1}[i]},T'[j])\rightarrow (S_i[i],T'[j])\rightarrow (\tau {S_{i-1}[i-1]},T'[j])\rightarrow
    \tau {T_{i-1}[i-1]},T'[j]) $$
    \noindent and
$$(\tau {T_{m}[m+1]},T'[j])\rightarrow
(T_{m+1}[m+1],T'[j])\rightarrow (\tau {S_{m}[m]},T'[j])\rightarrow
    \tau {T_{m}[m]},T'[j])$$

 \noindent where we abbreviated $(Y,Z)=\Hom_{\Der(H)}(Y,Z)$.  By \eqref{eq:star}, the end
  terms of the previous sequences
  are zero for {$j>2m+1$} and hence we get $$\Hom _{\Der(H)}(\tau^2
  X[1],T'[j])\simeq \Hom _{\Der(H)}(S_m[m],T'[j])=0$$ for {$j>2m+1$}, which is what we
  wanted to prove.
\end{proof}

Using ideas similar to those in section 3.5 in \cite{BFPPT} we can
state the following Theorem.

\begin{thm}\label{T2} Let $B = {\rm End}_{\Der(H)}(T)$ be an iterated
tilted algebra of type $Q$ with ${\rm gl dim}B \leq m+1$. Then for
sufficiently large $h$ the tilting complex $\rho_m^h(T)$ is in
$S_m$.
\end{thm}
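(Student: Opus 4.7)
The plan is to iterate the rolling procedure and identify an integer-valued invariant that strictly decreases with each application of $\rho_m$, so that after finitely many steps the tilting complex must lie in the fundamental domain $S_m$. That the iteration makes sense is immediate from Propositions \ref{prop:rho-T-tilting} and \ref{prop:gldim}: each $\rho_m^h(T)$ is again a tilting complex in $\Der(H)$ whose endomorphism algebra has global dimension at most $m+1$, so the procedure may be applied indefinitely and the hypothesis is preserved throughout.

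In the Dynkin case, I would let $\Sigma(T)$ denote the section dominating $T$ whose maximal elements lie in $\operatorname{add} T$. Lemma \ref{lem:nuevo} gives $\rho_m(T) < \tau\Sigma(T)$, so the section dominating $\rho_m(T)$ is strictly lower than $\Sigma(T)$. Iterating, all summands of $\rho_m^h(T)$ lie strictly below $\tau^h \Sigma(T)$ in the AR-quiver of $\Der(H)\simeq \ZZ Q$. Since $T$ has only finitely many summands, each contained in a single $F_m$-orbit, and since $S_m$ meets every $F_m$-orbit in exactly one point, a quantitative comparison between the sweeping sections $\tau^h \Sigma(T)$ and the position of $S_m$ inside $\Der(H)$, using that $F_m^{-1}=\tau[-m]$ controls how summands are shifted, will yield an $h$ for which every summand of $\rho_m^h(T)$ sits inside $S_m$.

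In the non-Dynkin case, I would use the decomposition $\Der(H)=\bigcup_{r\in \ZZ/2}\Rr[r]$ and write $T=\bigoplus_{a} T_{\Rr[a]}$. Since $\tau$ preserves each component $\Rr[r]$ (regular components are $\tau$-stable and, in the transjective part $\Rr[k-\tfrac{1}{2}]=\Ii[k-1]\cup \Pp[k]$, the identity $\tau P=I[-1]$ keeps the component), the functor $F_m^{-1}=\tau[-m]$ maps $\Rr[r]$ to $\Rr[r-m]$. If $s$ is the largest half-integer with $T_{\Rr[s]}\neq 0$, then rolling replaces $T_{\Rr[s]}$ by a summand in $\Rr[s-m]$, so the maximal index in the support of $\rho_m(T)$ is strictly smaller than $s$. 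The lexicographic pair $(s, \dim T_{\Rr[s]})$ strictly decreases at each step, and once the maximal index is small enough the complex lies inside $S_m$.

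The main obstacle will be the Dynkin case: Lemma \ref{lem:nuevo} only provides a one-step descent, so one must argue that iterated rollings cumulatively move every summand of $T$ into $S_m$, including the ones originally at the bottom, and not merely push the top indefinitely downward. This will require a finer analysis of how the sequence of dominating sections $\tau^h \Sigma(\rho_m^h(T))$ interacts with the fundamental domain $S_m$, exploiting the finiteness of the set of summands of $T$ together with the global dimension bound preserved by Proposition \ref{prop:gldim}.
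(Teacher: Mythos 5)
Your overall strategy --- iterate $\rho_m$, use Propositions \ref{prop:rho-T-tilting} and \ref{prop:gldim} to keep the hypotheses alive, and run a descent on the ``top'' of the support of the complex --- is the intended one: the paper in fact gives no proof of Theorem \ref{T2} and simply defers to the analogous argument of \cite{BFPPT}, Section 3.5. However, your plan has a genuine gap, only part of which you acknowledge, and it is not confined to the Dynkin case. The invariant you propose (the largest half-integer $s$ in the support, resp.\ the dominating section $\Sigma(T)$) decreases \emph{without bound} under iteration of $\rho_m$, so by itself it can never certify membership in the \emph{fixed} fundamental domain $S_m$: it only shows that the complex keeps sliding downward. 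To close the argument you need two further steps. First, you must show that the width of the support eventually becomes, and remains, small enough to fit inside one $F_m$-translate of $S_m$, and that the sliding window does not skip over $S_m$ (the amount by which the support drops at each roll is not constant, so this is not automatic); equivalently, that some iterate lands in some translate $F_m^{j}(S_m)$ and that a further bounded number of rolls realizes the global shift $F_m^{-1}$ so as to reach $S_m$ itself.

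Second, and more importantly, $S_m=\bigcup_{i=0}^{m-1}\Ind H[i]\cup H[m]$ is \emph{not} the set of all indecomposables concentrated in degrees $0,\dots,m$: its top layer consists only of the shifted projectives $H[m]$. So even after you have confined the support of $\rho_m^h(T)$ to the right range of degrees, you must still prove that every summand sitting in the top degree is a shifted projective $H$-module. This is where the tilting conditions $\Hom_{\Der(H)}(\rho_m^h(T),\rho_m^h(T)[i])=0$ for $i\neq 0$ and the bound $\gldim\End_{\Der(H)}(\rho_m^h(T))\leq m+1$ (preserved by Proposition \ref{prop:gldim}) have to be used in an essential way, and it is the real content of the argument in \cite{BFPPT}, Section 3.5, that your plan does not engage with. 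Until these two points are supplied, the proposal establishes only that the support of $\rho_m^h(T)$ drifts downward, not that some $\rho_m^h(T)$ lies in $S_m$.
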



\section{The main result}

We may now state our main theorem.

\begin{thm}\label{MT}
Let $H=kQ$ a hereditary algebra. If $T$ is a tilting complex in
$\Der(H)$ such that $B = {\rm End}_{\Der(H)}(T)$ has global
dimension at most $m+1$, then

\vskip.1in a) $\widetilde{T}$ is an $m$-cluster tilting object in
the $m$-cluster category $\mathcal{C}_m$ and
$\mathcal{C}_m(B)={\rm End}_{\mathcal{C}_m}(\widetilde{T})$ is a
$m$-cluster tilted algebra.

\vskip.1in b) There exists a sequence of algebra homomorphisms

$$B \rightarrow \mathcal{C}_m(B) \overset{\pi} {\rightarrow}  \mathcal{R}_m(B)
\rightarrow B$$
\noindent whose composition is the identity map and the kernel is contained in $\rm{rad}^2 C$.
Moreover $C$ and $\mathcal{R}_m(B)$ have the same quiver..

\vskip.1in c) There exists an iterated tilted algebra $B' = \rm End_{\Der(H)} \rho^h (T)$ of type
$Q$ with ${\rm gl dim}B' \leq m+1$ such that:

$$\mathcal{C}_m(B) \simeq \mathcal{R}_m(B')\simeq
\mathcal{C}_m(B')$$
\end{thm}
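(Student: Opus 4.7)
The strategy is to reduce every assertion to Theorem \ref{T} (the case when the tilting complex already belongs to the fundamental domain) by means of the $m$-rolling procedure of Section 3. First I would fix, by Theorem \ref{T2}, an integer $h$ such that $T'':=\rho_m^h(T)$ belongs to $S_m$, set $B'=\End_{\Der(H)}(T'')$, and observe, by iterating Proposition \ref{prop:gldim}, that $\gldim B'\le m+1$. The key reduction principle is the elementary fact that each application of $\rho_m$ sends a direct summand $X$ of the current tilting complex to $F_m^{-1}X$, which represents the same object in the orbit category; hence $\widetilde{T}=\widetilde{T''}$ in $\mathcal{C}_m$ and therefore
\[
\mathcal{C}_m(B)=\End_{\mathcal{C}_m}(\widetilde{T})=\End_{\mathcal{C}_m}(\widetilde{T''})=\mathcal{C}_m(B').
\]
This identification is what transports the results available at $B'$ back to $B$.

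With it in hand, part (a) is immediate from Theorem \ref{T} applied to $T''\in S_m$: that theorem asserts that $\widetilde{T''}=\widetilde{T}$ is an $m$-cluster tilting object and, consequently, that $\mathcal{C}_m(B)$ is an $m$-cluster tilted algebra. Part (c) is equally direct: Theorem \ref{T} applied to the tilting complex $T''\in S_m$, whose endomorphism algebra $B'$ still has global dimension at most $m+1$ by step one, yields $\mathcal{C}_m(B')\simeq \mathcal{R}_m(B')$; combining with $\mathcal{C}_m(B)=\mathcal{C}_m(B')$ gives the full chain $\mathcal{C}_m(B)\simeq\mathcal{R}_m(B')\simeq\mathcal{C}_m(B')$, with $B'=\End_{\Der(H)}\rho_m^h(T)$ of type $Q$ as required.

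For part (b) I would appeal to the tensor-algebra presentation of $\mathcal{C}_m(B)$ recalled from \cite{IO} in the introduction: with $E=\Ext^{m+1}_B(DB,B)$ we have $\mathcal{C}_m(B)=T_B(E)=B\oplus E\oplus(E\otimes_B E)\oplus\cdots$, and the $m$-relation extension $\mathcal{R}_m(B)=B\semidirprod E$ is precisely the quotient $T_B(E)\big/\bigoplus_{k\ge 2}E^{\otimes_B k}$. The canonical sequence
\[
B \,\rightarrow\, T_B(E)=\mathcal{C}_m(B) \,\overset{\pi}{\rightarrow}\, \mathcal{R}_m(B) \,\rightarrow\, B,
\]
formed by the degree-zero inclusion, the projection killing tensor degrees $\ge 2$, and the canonical projection of the trivial extension, is a chain of algebra homomorphisms whose composition is the identity; its kernel $\ker\pi=\bigoplus_{k\ge 2}E^{\otimes_B k}$ lies in $\rm{rad}^2\,\mathcal{C}_m(B)$ because $E$ is a nilpotent bimodule contained in the Jacobson radical of $T_B(E)$. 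The equality of quivers then follows at once: both $\mathcal{C}_m(B)$ and $\mathcal{R}_m(B)$ are obtained from $B$ by adjoining arrows in bijection with a basis of $E/(\rm{rad}\,B\cdot E+E\cdot \rm{rad}\,B)$.

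The real technical content sits inside Section 3: Proposition \ref{prop:gldim} and Theorem \ref{T2} together guarantee both that the rolling procedure terminates inside the fundamental domain and that the bound on global dimension is preserved along the way. Once those facts are available, the proof of Theorem \ref{MT} becomes a clean assembly of Theorem \ref{T} with the tensor-algebra description of $\mathcal{C}_m(B)$ from \cite{IO}, and I do not anticipate any further essential obstacle.
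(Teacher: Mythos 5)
Your proof of parts (a) and (c) follows the paper's argument essentially verbatim: roll $T$ into the fundamental domain via Theorem \ref{T2}, use Proposition \ref{prop:gldim} to keep the global dimension bound, observe that $\widetilde{T}=\widetilde{\rho_m^h(T)}$ in $\mathcal{C}_m$, and apply Theorem \ref{T} to $\rho_m^h(T)$. Where you genuinely diverge is part (b). The paper simply states that ``the proof of b) goes through exactly as in \cite{BFPPT}'', whereas you give a self-contained construction: you realize $\mathcal{C}_m(B)$ as the tensor algebra $T_B(E)$ with $E=\Ext^{m+1}_B(DB,B)$, exhibit $\mathcal{R}_m(B)=B\ssemidirprod E$ as the quotient of $T_B(E)$ by the ideal $\bigoplus_{k\ge 2}E^{\otimes_B k}$ of tensor degree at least two, and read off the sequence of homomorphisms, the identity composition, the containment $\ker\pi\subseteq \mathrm{rad}^2$, and the equality of quivers from the graded structure. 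This is cleaner and more explicit than a citation, but be aware of what it rests on: the identification $\mathcal{C}_m(B)\cong T_B(E)$ for a tilting complex $T$ \emph{not} in $S_m$ is only asserted in the paper's introduction (attributed to \cite{A} and \cite{IO}); the paper itself proves the multiplicative structure of $\End_{\mathcal{C}_m}(\widetilde{X})$ only for $X\in S_m$ (Proposition \ref{C} together with Lemma \ref{E}), which yields the trivial extension, not the full tensor algebra. So your argument for (b) is correct modulo that external input, and you should cite it explicitly rather than treat it as established within the paper. A minor point worth making precise in your write-up: the containment $E\subseteq\mathrm{rad}\,T_B(E)$ uses that the positively graded part of $T_B(E)$ is a nilpotent ideal, which holds here because $\mathcal{C}_m(B)$ is finite dimensional.
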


\begin{proof}
Let $H=kQ$ be a hereditary algebra. By Theorem \ref{T2} we have
that there exists a number $h$ such that $\rho_m^h(T)$ is in
$\mathcal{S}_m$. It follows from Theorem \ref{T2} that the object
$\rho_m^h(T)$ defines an $m$-cluster tilting object in the
$m$-cluster category $\mathcal{C}_m$. Hence ${\rm
End}_{\mathcal{C}_m}(\rho_m^h(T))$ is an $m$-cluster tilted
algebra. Since $\rho_m^h(T) \simeq T$ in $\mathcal{C}_m$ we get
that $\mathcal{C}_m(B) = {\rm End}_{\mathcal{C}_m}(T)$ is an
$m$-cluster tilted algebra. This proves a).

The proof of b) goes through exactly as in \cite{BFPPT}.

For c) note that by iterating the rolling procedure we show in
Theorem \ref{T2} that there exists a number $h$ such that
$\rho_m^h(T)$ is a tilting complex in $\mathcal{S}_m$. Then the
algebra $B' = {\rm End}_{\Der(H)}(\rho_m^h(T))$ is an iterated
tilted algebra such that ${\rm gl dim}B' \leq m+1$. By Theorem
\ref{T} we have that $\mathcal{C}_m(B')= {\rm
End}_{\mathcal{C}_m}(\rho_m^h(T)) \simeq \mathcal{R}_m(B').$ Since
$\rho_m^h(T) \simeq T$ the result follows.
\end{proof}

We illustrate the former result by an example.

\begin{ej} We come back to Example \ref{Ex}. Consider again the tilting complex $T$

\begin{center}
  \begin{picture}(352,116)
    \put(50,10){
      \put(0,80){\line(1,2){8}}
      \put(0,48){\line(1,2){24}}
      \put(0,16){\line(1,2){40}}
      \multiput(8,0)(16,0){14}{\line(1,2){48}}
      \put(232,0){\line(1,2){32}}
      \put(248,0){\line(1,2){16}}

      \put(0,16){\line(1,-2){8}}
      \put(0,48){\line(1,-2){24}}
      \put(0,80){\line(1,-2){40}}
      \multiput(8,96)(16,0){14}{\line(1,-2){48}}

      \put(248,96){\line(1,-2){16}}
      \put(232,96){\line(1,-2){32}}

      \multiput(0,0)(0,32){4}{
        \multiput(-8,0)(16,0){17}{
          \qbezier[7](4,0)(8,0)(12,0)
        }
      }

      \multiput(0,16)(0,32){3}{
        \multiput(0,0)(16,0){17}{
          \qbezier[7](4,0)(8,0)(12,0)
        }
      }

      \multiput(0,0)(0,32){4}{
        \multiput(8,0)(16,0){17}{\mypdott}
      }

      \multiput(0,0)(0,32){3}{
        \multiput(0,16)(16,0){17}{\mypdott}
      }
      \put(8,0){\mylabel{1}}
      \put(56,96){\mylabel{2}}
\put(56,0){\mylabeltwo{6}{$\bullet$}}
      \put(105,0){\mylabel{3}}
      \put(128,48){\mylabel{4}}
      \put(152,96){\mylabel{5}}
      \put(200,0){\mylabel{6}}
      \put(250,96){\mylabel{7}}
      \put(152,0){\mylabeltwo{1}{$\bullet\bullet$}}
      \put(104,96){\mylabeltwo{7}{$\bullet$}}
   }
  \end{picture}
\end{center}

The corresponding iterated tilted algebra $B= {\rm
End}_{\Der(H)}(T)$ is given by the quiver with relations

\begin{picture}(10,57)

   \put(100,27){
        \put(0,0){\RVCenter{\tiny $1$}}
        \put(30,0){\RVCenter{\tiny $2$}}
        \put(60,0){\RVCenter{\tiny $3$}}
        \put(90,0){\RVCenter{\tiny $4$}}
        \put(120,0){\RVCenter{\tiny $5$}}
        \put(150,0){\RVCenter{\tiny $6$}}
        \put(180,0){\RVCenter{\tiny $7$}}

\multiput(3,0)(30,0){6}{\vector(1,0){20}}

            \qbezier[20](10,3)(26,20)(42,3)
            \qbezier[20](40,3)(56,20)(72,3)
            \qbezier[20](100,3)(116,20)(132,3)
            \qbezier[20](130,3)(146,20)(162,3)
    }
\end{picture}
Consider $\mathcal{C}_2(B)$ given by
the following quiver where the relations are given by the composition of any two consecutive arrows in each cycle

\begin{picture}(10,57)
 \put(100,27){
        \put(0,0){\RVCenter{\tiny $1$}}
        \put(30,0){\RVCenter{\tiny $2$}}
        \put(60,0){\RVCenter{\tiny $3$}}
        \put(90,0){\RVCenter{\tiny $4$}}
        \put(120,0){\RVCenter{\tiny $5$}}
        \put(150,0){\RVCenter{\tiny $6$}}
        \put(180,0){\RVCenter{\tiny $7$}}

\multiput(3,0)(30,0){6}{\vector(1,0){20}}

            \qbezier[20](10,3)(26,20)(42,3)
            \qbezier[20](40,3)(56,20)(72,3)
            \qbezier[20](100,3)(116,20)(132,3)
            \qbezier[20](130,3)(146,20)(162,3)

 \put(35,28){\small $\beta$}
\put(132,28){\small $\mu$}
            \qbezier(85,5)(45,40)(3,5)
                  \put(1,3){\vector(-1,-1){0.1}}
            \qbezier(175,5)(135,40)(93,5)
                  \put(91,3){\vector(-1,-1){0.1}}
    }

\end{picture}

\noindent We have seen that
$\mathcal{C}_2(B)$ is not isomorphic to the trivial extension
$\mathcal{R}_2(B)$.

Applying the $m$-rolling procedure we get that $T^*= T_1 \oplus T_2 \oplus T_3 \oplus T_4 \oplus T_5
\oplus F_2^{-1}T_6 \oplus F_2^{-1}T_7$ is a tilting complex such
that the indecomposable summands of it are in the fundamental
domain $\mathcal{S}_2$. The endomorphism algebra $B'={\rm
End}_{\Der(H)}(T^*)$ is given by the following quiver with relations

\begin{picture}(10,57)

   \put(100,27){
        \put(0,0){\RVCenter{\tiny $1$}}
        \put(30,0){\RVCenter{\tiny $2$}}
        \put(60,0){\RVCenter{\tiny $3$}}
        \put(90,0){\RVCenter{\tiny $4$}}
        \put(120,0){\RVCenter{\tiny $5$}}
        \put(150,0){\RVCenter{\tiny $6$}}
        \put(180,0){\RVCenter{\tiny $7$}}

\multiput(3,0)(30,0){4}{\vector(1,0){20}}
                  \put(153,0){\vector(1,0){20}}
            \qbezier[20](10,3)(26,20)(42,3)
            \qbezier[20](40,3)(56,20)(72,3)

            \qbezier[15](155,3)(180,0)(157,15)
            \qbezier[15](108,3)(85,0)(120,15)

            \qbezier(175,5)(135,40)(93,5)
                  \put(91,3){\vector(-1,-1){0.1}}    }
\end{picture}
By computing the $2$-cluster tilted algebra ${\rm
End}_{\mathcal{C}_2}(T^*)$ we get that $${\rm
End}_{\mathcal{C}_2}(T^*)=\mathcal{R}_2(B') = B' \semidirprod {\rm
Ext}^3_{B'}({\rm D}B',B') \simeq \mathcal{C}_2(B).$$

\begin{picture}(10,57)

   \put(100,27){
        \put(0,0){\RVCenter{\tiny $1$}}
        \put(30,0){\RVCenter{\tiny $2$}}
        \put(60,0){\RVCenter{\tiny $3$}}
        \put(90,0){\RVCenter{\tiny $4$}}
        \put(120,0){\RVCenter{\tiny $5$}}
        \put(150,0){\RVCenter{\tiny $6$}}
        \put(180,0){\RVCenter{\tiny $7$}}

\multiput(3,0)(30,0){6}{\vector(1,0){20}}

            \qbezier[20](10,3)(26,20)(42,3)
            \qbezier[20](40,3)(56,20)(72,3)

            \qbezier(85,5)(45,40)(3,5)
                  \put(1,3){\vector(-1,-1){0.1}}
            \qbezier(175,5)(135,40)(93,5)
                  \put(91,3){\vector(-1,-1){0.1}}

            \qbezier[15](155,3)(180,0)(157,15)
            \qbezier[15](108,3)(85,0)(120,15)    }
\end{picture}
\end{ej}
\begin{rem}
 In \cite{M}, Murphy has proved that $m$-cluster tilted algebras of type $A_n$ are gentle and that if the quiver of an m-cluster tilted algebra of type $A_n$ contains cycles they must be of length $m+ 2$ and must have full relations,
that is the composition of any two consecutive arrows in the cycle must be a relation. We observe that every connected component of an $m$-cluster tilted algebra of type $A_n$ is induced by a tilting complex. In fact,  we can choose the summands of the tilting complex in a similar way as we did in the example above. The ones which give rise to the cycles in analogous way, and for the hereditary parts we choose the summands over a complete slice. Then, any connected component of an $m$-cluster tilted algebra of type $A_n$ is an $m_i$-relation extension of an iterated tilted algebra of the same type. We conjecture that any $m$-cluster tilted algebra is a direct product of $m_i$-relation extensions of iterated tilted algebras.
\end{rem}


\begin{thebibliography}{BFPPT}





\bibitem{A} C.~Amiot, {\it Cluster categories for algebras of global dimension 2 and quivers with potential}.
Ann. Inst. Fourier. (2009).  arXiv:0805.1035.

\bibitem{ABS}I.~Assem, T.~Br\"ustle, R.~Schiffler: {\em Cluster
    tilted algebras as trivial extensions}
Bull. Lond. Math. )Soc. 40, No. 1, 151-162 (2008).


\bibitem{AGS} I.Assem, A. Gatica, R. Schiffler. {\em The Higher Relation Bimodule.}
arXiv 1111.6235v1.

\bibitem{BFPPT}
M. Barot, E. Fern\'andez, M.I. Platzeck, N.I. Pratti, S. Trepode
{\em From iterated tilted algebras to cluster-tilted algebras}
Advances in Mathematics  223 (2010)1468-1494.

\bibitem{BM}
K. Baur, R. Marsh, {\em A geometric description of the m-cluster
categories}, Trans. Amer. Math. Soc. 360, No. 11, 5789-5803
(2008).

\bibitem{BMR1}A. B. Buan, R. Marsh I. Reiten, {\em Cluster mutation via quiver representations},
Comment. Math. Helv. 83, No. 1, 143-177 (2008)


\bibitem{BMR2}A. B. Buan, R. Marsh, I. Reiten {\em Cluster-tilted algebras}. T ans.
Amer. Math. Soc. 359, Nº$^{\circ}$ 1, 323-332 (2007)

\bibitem{BMR3}A. B. Buan, R. Marsh I. Reiten, {\em Cluster-tilted algebras of finite
representation type}, preprint 2005, arXiv:math.RT/0509198

\bibitem{BR}A. B. Buan I. Reiten, {\em From tilted to cluster-tilted algebras of
Dynkin type}, preprint 2005, arXiv:math.RT/0510445.

\bibitem{BMRRT}
A. Buan, R. Marsh, I. Reiten , M. Reineke, G. Todorov {\em Tilting
theory and cluster combinatories} Adv. Math. 204, N$^{\circ}$ 2,
572-618 (2006)

\bibitem{BRS} A. B.Buan, I. Reiten A. I. Seven, {\em Tame concealed
algebras and cluster quivers of minimal infinite type} , Journal
of Pure and Applied algebra, Volume 211, Issue 1, 1-292 (2007)


\bibitem{CCS} P. Caldero, F. Chapoton R. Schiffler, {\em Quivers with relations
arising from clusters (An case)}, Trans. Amer. Math. Soc. 358
(2006), no. 3, 1347-1364.



\bibitem{CCS2} P. Caldero, F. Chapoton R. Schiffler,  {\em Quivers with relations
and cluster tilted algebras}, to appear in Algebras and
Representation Theory.


\bibitem{FGR} R. Fossum, ; P. Griffith; I. Reiten, I.; {\em Trivial extensions of abelian categories and applications to rings: an expository account}, Ring theory (Proc. Conf., Park City, Utah, 1971), pp. 125-151. Academic Press, New York, 1972

\bibitem{FP}
E. Fern\'andez, M.I. Platzeck {\em Presentations of trivial
extensions of finite dimensional algebras and a theorem of Sheila
Brenner} J. Algebra 249 326-344 (2002).

\bibitem{H}
D. Happel {\em Triangulated categories in the representation theory
of finite dimensional algebras}, London Mathematical Society.
Lecture Notes Series 119, Cambridge University Press,1988.

\bibitem{IO} O. Iyama; S. Oppermann. {\it Stable categories of higher preprojective algebras.}
 arXiv:0912.3412.

\bibitem{K}
K. Keller {\em On triangulated orbit categories}, Doc. Math. 10
(2005), 551-581 (electronic)


\bibitem{KR} B. Keller, I. Reiten, {\em Cluster-tilted algebras are Gorenstein
and stably Calabi-Yau}, preprint (2005), arXiv:math.RT/0512471.

\bibitem{M}  G. J. Murphy. {\em Derived equivalence classification of m-cluster tilted algebras of type An}. J. Algebra 323,4 (2010), 920–965

\bibitem{R} J. Rickard, {\em Morita theory for derived
categories}, J. London Math. Soc. (2) 39  436-456 (1989)

\bibitem{RV} I- Reiten, M. Van den Bergh, {\em Grothendieck groups and tilting
object}, Algebr. Represent. Theory 4 (1) 1-21 (2001)

\bibitem{T}
H. Thomas {\em Defining an m-cluster category} J. Algebra 318, No.
1, 37-46 (2007)

\bibitem{Z} B. Zhu {\em Generalized cluster complexes via quiver
representations}, preprint (2006)
\end{thebibliography}
\end{document}